\newtheorem{theo}{Theorem}[section]
\newtheorem{prop}{Proposition}[section]
\newtheorem{lemma}{Lemma}[section]
\newtheorem{cor}{Corollary}[section]
\newtheorem{defi}{Definition}[section]
\newtheorem{ques}{Question}[section]
\newtheorem{rem}{Remark}[section]
\def \htop{\mathsf{h_{top}}}
\def \hbor{\mathsf{h_{bor}}}
\def \ocap{\mathsf{ocap}}
\def \Id {\mathsf{Id}} 
\begin{document}
\begin{large}

 \title[Topological and almost Borel Universality]{Topological and almost Borel Universality for systems with the weak specification property}
\author{David Burguet}
\email{david.burguet@upmc.fr}
\maketitle

\begin{abstract}We show that systems with some specification properties are topologically or almost Borel universal, in the sense that any aperiodic subshift with lower entropy may be topologically or almost Borel  embedded. 
\end{abstract}

\section{Introduction}

Usually a universal system is defined with respect to a collection $\mathcal{C}$ of measure preserving ergodic aperiodic transformations : it is a topological system\footnote{By a topological system $(Y,S)$ we will always mean that $Y$ is compact metric space and $S$ is a homeomorphism  from $Y$ into itself with finite topological entropy. The distance on $Y$ will be denoted by $d$ or $d_Y$.} $(Y,S)$ such that for any system $(X,T,\mathcal{A},\nu)$ of $\mathcal{C}$ there is a Borel $S$-invariant ergodic probability measure $\mu$ such that  $(Y,S,\mathcal{B},\mu)$ with $\mathcal{B}$ being  the Borel $\sigma$-algebra, is isomorphic to $(X,T,\mathcal{A},\nu)$. We will refer to such systems as \textbf{measure theoretical $\mathcal{C}$-universal} systems. The system is called \textbf{fully $\mathcal{C}$-universal systems} when we can assume the measure $\nu$ has full support. If one requires also that for any Borel probability $S$-invariant measure $\mu$ the system $(Y,S,\mathcal{B},\mu)$ is isomorphic to some  element of $\mathcal{C}$, then we will say that $(Y,S)$ is strictly $\mathcal{C}$-universal.

We focus here  on the collection $\{h\in I\}=\{(X,T,\mathcal{A},\nu), \ h(\nu,T)\in I\}$ for some interval $I$ of $\mathbb{R}^+$. Krieger's generator theorem \cite{Kr} claims the full shift with $K$-symbols is $\{h< \log K\}$-universal. More recently T. Downarowicz and J. Serafin \cite{DS} have built strictly $\{h\in I\}$-universal systems for any non-degenerated intervals $I$. It was also previously established by J. Serafin \cite{zero} that there is no $\{h=0\}$-strictly universal system. Concerning fully universal systems, A.Quas and T.Soo proved by using Baire arguments on joinings that some systems $(Y,S)$ with a specification  property are fully $\{h<h_{top}(S)\}$-universal.  Theorem \ref{fin} and Theorem \ref{lat} below generalize their result. D.A.Lind and J.-P.Thouvenot \cite{thou} proved earlier that hyperbolic (two-dimensional) toral automorphisms are fully universal. \\

One can also consider universal problems in the  topological and Borel  setting. A topological system $(Y,S)$ is aperiodic when we have $S^nx\neq x$ for all $x\in Y$ and for all positive integers $n$. When $Y$ is a zero-dimensional set, the topological system $(Y,S)$ is said to be zero-dimensional. Such a system is expansive when there exists a partition $P$ of $Y$ in clopen sets such that the maximum of the diameters of atoms in $\bigvee_{k=-n}^nS^{-k}P$ is decreasing to zero when $n$ goes to infinity. Let us now consider a collection $\mathcal{C}$ of expansive aperiodic  zero-dimensional systems (e.a.z. systems for short). We say a topological system $(Y,S)$ is \textbf{$\mathcal{C}$-universal} when for any $(X,T)$ in $\mathcal{C}$ there is a subsystem $(Z,R)$  of 
$(Y,S)$ topologically conjugated to $(X,T)$ ; in other words the system $(X,T)$ may be topologically embedded into $(Y,S)$. Here  we mainly  focus on the 
collection $\{\htop\in I\}=\{(X,T)\ \text{ e.a.z. with }\htop(T)\in I\}$ for an interval $I$. Around ten years after the generator theorem, Krieger proved the full 
shift with $K$-symbols is $\{ \htop< \log K \}$-universal \cite{Kri}. The proof uses 
similar tools (Rohklin towers) but in some sense the last theorem is  easier to 
prove because with the  expansiveness property we get a generator in "one 
step" whereas a multiscale limit approach is necessary in the generator theorem. \\

By a Borel system $(X,T)$ we mean a bijection  $T$ of a standard Borel space $X$ such that $T$ and $T^{-1}$ are invertible. The Borel entropy $\hbor(T)$ is  the supremum of the entropies $h(\nu,T)$ taken over all $T$-invariant measures $\nu$ (according to the variational principle $\hbor(T)=\htop(T)$ when $(X,T)$ is a topological system).  A Borel system $(X,T)$ \textbf{almost Borel embeds}  in another one $(Y,S)$ when there is a Borel injective map $\psi:X'\rightarrow Y$ satisfying $\psi\circ T=S\circ \psi$ with $X'$  being a  full subset of $X$, i.e. a subset of full measure for any $T$-invariant  probability measure. 
When  every Borel system $(X,T)$ in a collection
$\mathcal{C}$ of Borel aperiodic systems almost Borel embeds  in $(Y,S)$ we will say $(Y,S)$ is \textbf{almost Borel  $\mathcal{C}$-universal}. In particular when any system in $\mathcal{C}$ is a topological system,  a $\mathcal{C}$-universal system is almost Borel $\mathcal{C}$-universal.  In \cite{Hoc} M.Hochman showed that a subshift of finite type $(Y,S)$ is $\{\hbor< \htop(S)\}$-universal with $\{\hbor< \htop(S)\}=\{(X,T)\ \text{ aperiodic Borel with }\hbor(T)<\htop(S)\}$.  \\

In this paper we investigate the universality property of systems with the specification property (see \cite{kw} for a panorama on systems with specification-like properties). For $\epsilon>0$ a system $(Y,S)$ is said to have the \textbf{almost $\epsilon$-weak specification property} when for any pieces of orbits
\begin{center}
$T^{a_1}x_1,T^{a_1+1}x_1,...,T^{b_1}x_1$, ...., $T^{a_p}x_p,T^{a_p+1}x_1,...,T^{b_p}x_p$ 
\end{center} with $a_1<b_1<a_2<...<b_p$ there is a point $x\in Y $ with $d(T^kx, T^kx_i)<\epsilon$ for $k\in \bigcup_i[a_i,b_i]$ provided that $b_{i+1}-a_i\geq  L(b_{i+1}-a_{i+1})$ for some function $L=L_\epsilon$ satisfying $\lim_{n \rightarrow +\infty}L(n)/n=0$. When $L$ is a constant function then $(Y,S)$ is said to satisfy the \textbf{$\epsilon$-weak specification property}. 

The system $(Y,S)$ has the \textbf{(resp. almost) weak specification property}  when it satisfies the (resp. almost) $\epsilon$-weak specification property for all $\epsilon>0$.   It is easily seen that a subshift satisfies the (almost) weak specification property whenever it satisfies the (almost) $ \epsilon$-weak specification property for some $\epsilon>0$. \\

Our main result related to topological universality follows:
\begin{theo}\label{top}
Any subshift $(Y,S)$ with the weak specification property is $\{\htop<\htop(S)\}$-universal.\end{theo}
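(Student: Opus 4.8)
The plan is to embed any expansive aperiodic zero-dimensional (e.a.z.) system $(X,T)$ with $\htop(T) < \htop(S)$ as a subsystem of $(Y,S)$, building on the classical Krieger strategy combined with the weak specification property. Since $(X,T)$ is expansive and zero-dimensional, it is topologically conjugate to a subshift on some finite alphabet. So I may assume $(X,T)$ is itself a subshift with $\htop(T) < \htop(S)$. The goal is to produce a topological embedding $\phi : X \to Y$, that is, a continuous injective map intertwining the dynamics whose image is a closed $S$-invariant set.

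**First I would** use the entropy gap to find a large finite set of "codewords" in $Y$ that will encode symbols of $X$. Concretely, the weak specification property lets me concatenate orbit segments of $Y$ with bounded gaps, so I can glue together finitely many long orbit pieces that stay within $\epsilon$ of prescribed targets. For a fixed large integer $N$ (a block length) I want to assign to each admissible $N$-block of $X$ a distinct orbit segment of length roughly $N$ in $Y$, in such a way that distinct $X$-blocks receive uniformly separated $Y$-segments. The number of admissible $N$-blocks grows like $e^{N\htop(T)}$, while the number of available well-separated $Y$-segments grows like $e^{N\htop(S)}$; since $\htop(T)<\htop(S)$ there is ample room to make this assignment injective with a definite separation constant, even after reserving a small "marker" structure.

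**The key construction** is to build the embedding through a multiscale / marker argument. I would introduce a hierarchy of markers in $X$ (using aperiodicity and the Krieger marker lemma to partition orbits into blocks of controlled, comparable lengths), then use weak specification to concatenate the corresponding $Y$-codewords, with the sub-linear gap function $L$ absorbing the spacing needed to insert synchronizing markers in $Y$. The point of inserting distinguishable markers is to guarantee that the coding is injective and, crucially, that the inverse map is continuous: from a point $\phi(x) \in Y$ one must be able to locate the marker positions and thereby decode $x$ uniquely. Taking $\epsilon \to 0$ along the scales and passing to a limit yields a genuine topological conjugacy onto a closed invariant image, rather than merely an $\epsilon$-approximation.

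**The main obstacle** I anticipate is ensuring injectivity together with continuity of the inverse (i.e.\ that $\phi$ is a homeomorphism onto its image) while only having $\epsilon$-control from specification rather than exact gluing. Specification gives shadowing up to $\epsilon$, so two different orbits of $X$ might be coded by $Y$-orbits that drift close together; the marker separation must be engineered to dominate this $\epsilon$-drift at every scale, and the sublinear gaps $L(n)$ must be shown not to accumulate entropy cost in the limit. Managing this tension — keeping the gap overhead sublinear so the entropy budget $\htop(T)<\htop(S)$ is never exceeded, while keeping markers separated enough to decode — is where the real work lies, and it is precisely where the flexibility of the \emph{weak} (rather than exact) specification property must be exploited carefully.
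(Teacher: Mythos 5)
Your single-scale setup (entropy gap, separated codewords, marker structure for decoding) is sound and closely parallels the paper's construction of admissible specifications, but the step you rely on to finish --- ``taking $\epsilon \to 0$ along the scales and passing to a limit yields a genuine topological conjugacy'' --- is a genuine gap, and it is exactly the step that cannot work as stated. Two problems. First, even at a fixed scale, weak specification only hands you a \emph{closed-set-valued} equivariant shadowing map: for each coded itinerary there is a nonempty set of points of $Y$ that $\epsilon$-trace it. To get a map at all you must select one point per itinerary equivariantly, and nothing guarantees such a selector is continuous (the paper points out that an equivariant selector exists by the axiom of choice, but even its measurability is unclear in general). Second, the scale-$k$ coding maps are necessarily non-equivariant near the marker/gap coordinates, so the sequence of maps can only be controlled off the set of points whose orbits visit those regions at infinitely many scales; a Borel--Cantelli argument makes that exceptional set null for every invariant measure, but it is generally nonempty, and the pointwise limit is merely a Borel map, injective and equivariant only on a full set. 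This is precisely why the multiscale limiting argument in this paper yields only the \emph{almost Borel} universality of Theorem 2; it cannot produce the continuous embedding with closed invariant image that Theorem 1 asserts.

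The idea you are missing is that for \emph{subshifts} with weak specification the limit can be avoided entirely, because the gluing can be made \emph{exact} rather than approximate. Such a subshift is synchronized (a theorem of Jung, \cite{jun}): there is a word $u$ such that whenever $vu$ and $uw$ are words of $Y$, so is $vuw$. This gives the paper's ``coded weak specification property'': there is a constant $c$ and for each word $w$ a decoration $\tilde w = u a_w w b_w$, with $|a_w|=|b_w|$ bounded, such that \emph{every} concatenation $\tilde w_1 \tilde w_2 \cdots \tilde w_n$ is a genuine word of $Y$. Consequently, after building one admissible specification SFT $\Lambda$ (marker word plus an $(N,\epsilon)$-separated family of codewords, with $\htop(\Lambda)$ as close to $\htop(S)$ as desired), the block code sending each generating word of $\Lambda$ to its decorated version is an exact, hence continuous, equivariant map $\Lambda \to Y$; it is a continuous equivariant selector of the set-valued shadowing map at a single fixed scale. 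Injectivity follows from the marker/separation conditions, continuity is automatic for a block code, and compactness of $\Lambda$ makes it a homeomorphism onto a closed invariant image. The theorem then follows by composing with Krieger's embedding theorem, which places any e.a.z.\ system of entropy less than $\htop(\Lambda)$ inside the mixing SFT $\Lambda$. In short, the real work is not managing $\epsilon$-drift across infinitely many scales, but invoking synchronization to eliminate the drift at one scale.
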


In fact we prove this universality property for any subshift with the almost coded weak  specification property (see Definition \ref{cco}). We do not know any example of subshift with the almost weak specification property which does not satisfy  the almost coded weak specification property.\\

For general  systems with the almost weak specification property, we establish the almost Borel universality. 

\begin{theo} \label{fin}
Any topological system $(Y,S)$ with the almost weak specification property is almost Borel $\{\hbor<\htop(S)\}$-universal.
\end{theo}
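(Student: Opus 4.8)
The plan is to reduce the statement to embedding one fixed concrete system and then to realize that system inside $Y$ by shadowing, exploiting the almost weak specification property. Fix an aperiodic Borel system $(X,T)$ with $\hbor(T)<\htop(S)$ and choose $h$ with $\hbor(T)<h<\htop(S)$. Since mixing subshift of finite type entropies are dense in $(0,\infty)$, pick a mixing SFT $\Sigma$ with $\hbor(T)<\htop(\Sigma)<h$. By Hochman's theorem (quoted above) $\Sigma$ is almost Borel $\{\hbor<\htop(\Sigma)\}$-universal, so $(X,T)$ almost Borel embeds in $\Sigma$ via some Borel equivariant injection $\phi$ defined on a full set; because $(X,T)$ is aperiodic and $\phi$ is injective and equivariant, the image lands in the aperiodic part $\Sigma_{\mathrm{ap}}$. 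Almost Borel embeddings compose: invariant measures push forward to invariant measures, so the preimage of a full set is full. Hence it suffices to almost Borel embed $(\Sigma_{\mathrm{ap}},\sigma)$ into $(Y,S)$, the gain being that the source is now a fixed expansive zero-dimensional system of complexity strictly below $\htop(S)$.

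Next I would fix $\epsilon>0$ small enough that the exponential growth rate $h_\epsilon(S)$ of pairwise $\epsilon$-separated length-$N$ orbit segments of $Y$ exceeds $\htop(\Sigma)$; this is possible since $\sup_\epsilon h_\epsilon(S)=\htop(S)>\htop(\Sigma)$. Using the Borel marker lemma for the aperiodic system $(\Sigma_{\mathrm{ap}},\sigma)$, choose at a large scale $N$ a Borel complete section whose return times are comparable to $N$, cutting each orbit into consecutive words $x[m_i,m_{i+1})$ of length $\ell_i\asymp N$ in a shift-coherent way. To each such word I assign, injectively, a genuine orbit segment of $Y$ of length $\ell_i$: there are at most $e^{N\htop(\Sigma)+o(N)}$ admissible $\Sigma$-words of length $N$ but at least $e^{N(h_\epsilon(S))-o(N)}$ available $\epsilon$-separated segments, so an injective $\epsilon$-separated assignment exists. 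Crucially, each block is shadowed only on a prefix of length $\ell_i-L_\epsilon(\ell_i)$, the suffix of length $L_\epsilon(\ell_i)=o(\ell_i)$ being reserved as the specification gap; the almost weak specification property then glues the prefixes into a single point $\psi(x)\in Y$ that $\epsilon$-shadows every chosen representative. Because the gaps are taken from within the time axis rather than inserted, $\psi$ does not reparametrize time, so $\psi\circ\sigma=S\circ\psi$ can be arranged.

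The heart of the argument, and the main obstacle, is to make this coding simultaneously equivariant and injective on a full set while absorbing the gaps. Equivariance forces $\psi(x)$ to depend on $x$ only through its bi-infinite marker-block structure read at the current coordinate, so that the gaps, whose positions are determined by that same structure, are inserted shift-coherently; keeping the gap inside each block (rather than stretching time) is what makes this compatible with $S$. Injectivity is recovered from the shadowing: the assigned representatives are $\epsilon$-separated, so distinct words are distinguishable on their controlled prefixes, and the set of points whose orbits meet the gaps with positive density is null for every $\sigma$-invariant measure because $L_\epsilon(\ell)=o(\ell)$; a multi-scale argument (refining $N$) then guarantees that for almost every pair $x\neq x'$ the first differing coordinate eventually lies in a controlled region, so $\psi(x)\neq\psi(x')$. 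This is precisely where \emph{almost Borel} rather than topological universality is used and where the subexponential gap bound is essential: one only controls a full set, discarding the sparse gap points. Finally one checks that $\psi$ is a Borel embedding on a full subset $\Sigma'\subseteq\Sigma_{\mathrm{ap}}$ and composes with $\phi$. The passage from the per-scale coding to a single limiting map can alternatively be organized through the measure-theoretic universality of $(Y,S)$, proving first via specification that every ergodic aperiodic measure of entropy $<\htop(S)$ is isomorphic to an invariant measure on $Y$ and then upgrading by Hochman's almost-Borel transfer machinery; I expect the verification that the entropy slack $\htop(S)-\htop(\Sigma)$ uniformly absorbs the $o(N)$ gap cost to be the routine but indispensable estimate.
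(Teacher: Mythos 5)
Your reduction to embedding a fixed mixing SFT $\Sigma$ into $(Y,S)$ is sound (composition of almost Borel embeddings works, and it parallels the paper's own reduction via Lemma \ref{fest}), but the core of your construction has a genuine gap, and it is exactly the difficulty the paper's multiscale machinery was built to overcome. You write that the almost weak specification property "glues the prefixes into a single point $\psi(x)\in Y$" and that "$\psi\circ\sigma=S\circ\psi$ can be arranged." Specification does not produce a point: it produces a non-empty closed set of shadowing points (the paper's set-valued map $\Delta^\Lambda_\epsilon$). That set-valued map is equivariant and upper semicontinuous, but to obtain an embedding you must select one point from each such set in a way that is simultaneously Borel and equivariant, and this is precisely the obstruction the paper flags in Section 2.4: an equivariant selector exists by the axiom of choice, but it is not known whether a measurable one exists. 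For subshifts with the coded weak specification property the paper constructs a continuous equivariant selector (Lemmas \ref{ee} and \ref{dd}), which is why the topological result (Theorem \ref{top}) is restricted to subshifts. For a general system with only the almost weak specification property, no single-scale selection is available; the paper instead avoids selecting altogether: it builds an admissible inverse limit of specifications $\varprojlim_k\Lambda_k$ at scales $\epsilon_k\downarrow 0$ with $\sum_{k>0}\epsilon_k<\epsilon_0/2$, uses the concrete, locally constant but \emph{non-equivariant} maps $\phi_k=\psi_{\Lambda_k}\circ p_k$, and proves by an orbit-capacity/Borel--Cantelli argument (Lemma \ref{rat}) that $(\phi_k)_k$ converges on a full set to a map that is equivariant and injective only in the limit. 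Your "multi-scale argument (refining $N$)" refines the marker length in the source $\Sigma$, not the shadowing scale $\epsilon$ in $Y$; it does not address the selection problem at all.

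There is a second, related gap in your injectivity argument: $\epsilon$-separation of the assigned representatives only distinguishes $\psi(x)$ from $\psi(x')$ when the block decompositions of $x$ and $x'$ are aligned. For misaligned pairs you must be able to locate the block boundaries from the shadowing data alone, i.e. you need a synchronization mechanism inside $Y$. This is what the paper's admissibility conditions (items (3) and (4) of Definition \ref{adm}: a marker point $o$ with $B(o,m,\epsilon)\cap S^kB(o,m,\epsilon)=\emptyset$ for $0<k\leq 3m/4$, and the marker avoiding the separated set) together with Lemma \ref{ent} provide, and producing such a marker costs real work (Lemma \ref{vvo}, via Ornstein--Weiss return times, and Lemma \ref{ds}). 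Your sketch supplies no such mechanism. Finally, the fallback in your last sentence --- prove measure-theoretic universality first and then "upgrade by Hochman's almost-Borel transfer machinery" --- does not exist as a general device: Hochman's theorem is a statement about mixing SFTs, not a procedure for promoting per-measure isomorphisms to a single Borel embedding defined on a full set, and almost Borel universality is strictly stronger than measure-theoretic universality.
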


By Jewett-Krieger theorem, for any measure preserving aperiodic  ergodic transformation there is a uniquely ergodic (aperiodic) subshift whose unique invariant measure is isomorphic to this given measure preserving system. Thus any topological system $(Y,S)$ which is $\{\htop<\htop(S)\}$-universal or   almost Borel $\{\hbor<\htop(S)\}$-universal is also measure theoretical $\{h<\htop(S)\}$-universal. As a corollary we give an  affirmative answer to  a question raised  by A.Quas and T.Soo.

\begin{cor}\label{co}
Any topological system $(Y,S)$ with the almost specification property is measure theoretical $\{h<\htop(S)\}$-universal.
\end{cor}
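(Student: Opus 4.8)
The plan is to deduce the corollary directly from Theorem \ref{fin} by passing to a symbolic model and pushing forward its unique invariant measure. Fix an ergodic aperiodic measure preserving system $(X,T,\mathcal{A},\nu)$ with $h(\nu,T)<\htop(S)$; the goal is to produce an $S$-invariant ergodic Borel probability $\mu$ on $Y$ so that $(Y,S,\mathcal{B},\mu)$ is isomorphic to $(X,T,\mathcal{A},\nu)$. First I would invoke the Jewett--Krieger theorem to realize $(X,T,\mathcal{A},\nu)$ as the unique invariant measure $\lambda$ of a uniquely ergodic aperiodic subshift $(\Sigma,\sigma)$. Since such a subshift is an expansive aperiodic zero-dimensional system and $\lambda$ is its only invariant measure, the variational principle gives $\hbor(\sigma)=\htop(\sigma)=h(\lambda,\sigma)=h(\nu,T)<\htop(S)$; in particular $(\Sigma,\sigma)$ is an aperiodic Borel system lying in the collection $\{\hbor<\htop(S)\}$.

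The second step is to apply Theorem \ref{fin}. Granting that the almost specification hypothesis supplies the almost weak specification property demanded by that theorem, $(Y,S)$ is almost Borel $\{\hbor<\htop(S)\}$-universal, so there is a Borel injective equivariant map $\psi\colon\Sigma'\to Y$ with $\psi\circ\sigma=S\circ\psi$, defined on a full subset $\Sigma'\subseteq\Sigma$. Because $\lambda$ is the only invariant measure of $(\Sigma,\sigma)$ and $\Sigma'$ is full, one has $\lambda(\Sigma')=1$.

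Finally I would set $\mu:=\psi_*\lambda$. Equivariance of $\psi$ makes $\mu$ an $S$-invariant Borel probability measure, injectivity of $\psi$ on the full set $\Sigma'$ makes $\psi$ a measure-theoretic isomorphism of $(\Sigma,\sigma,\lambda)$ onto $(Y,S,\mathcal{B},\mu)$, and ergodicity of $\lambda$ transfers to $\mu$. Composing this isomorphism with the Jewett--Krieger isomorphism $(X,T,\mathcal{A},\nu)\cong(\Sigma,\sigma,\lambda)$ yields $(Y,S,\mathcal{B},\mu)\cong(X,T,\mathcal{A},\nu)$, which is exactly the measure theoretical universality required.

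The argument is essentially formal once Theorem \ref{fin} is in hand, so there is no serious analytic obstacle; the only point requiring care, and hence the step I expect to be the crux, is to confirm that the almost specification hypothesis of the corollary genuinely entails the almost weak specification property used by Theorem \ref{fin} (and, if one prefers to route the subshift case through Theorem \ref{top}, the weak specification property). Once the hypotheses are matched, the Jewett--Krieger realization, the entropy identity coming from unique ergodicity, and the push-forward of the unique measure along the almost Borel embedding are all routine.
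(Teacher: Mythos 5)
Your proof is correct and is essentially the paper's own argument: the paper deduces the corollary from Theorem \ref{fin} in exactly this way, realizing the given ergodic system as the unique invariant measure of a uniquely ergodic aperiodic subshift via Jewett--Krieger and pushing that measure forward along the almost Borel embedding. The one point you flag as the crux is only a terminological artifact of the paper, whose ``almost specification property'' in the statement of Corollary \ref{co} is the ``almost weak specification property'' hypothesis of Theorem \ref{fin}, so no additional argument is needed there.
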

 
 It was previously proved by A.Quas and T.Soo in \cite{trans} for  systems satisfying  also the asymptotic $h$-expansiveness and the small boundary property. Thus these two additional hypothesis are useless. 
 
 A topological system $(X,T)$ \textbf{almost Borel embeds finitarily} in another one $(Y,S)$ when there is a Borel map $\psi:X\rightarrow Y$ with $\psi\circ T=S\circ \psi$ such that $\psi$ is a 
finitary embedding, i.e. $\psi$ is continuous and injective on  a full set, i.e. a set of full measure for any $T$-invariant measure. 

\begin{ques}[See also Question 1 in \cite{mod}]
Is any topological system $(Y,S)$ with the almost specification property almost Borel  finitarily $\{h<\htop(S)\}$-universal, in the sense that every e.a.z. system $(X,T)$ in $\{h<\htop(S)\}$ almost Borel embeds finitarily in $(Y,S)$?
 \end{ques}

We give now an overview of the main ideas of the proof. To a given system $(Y,S)$ with the (almost) weak specification property we associate  intermediate subshifts of finite type, called specifications. The topological entropy of specifications may be arbitrarily close to $\htop(S)$  so that it is enough to embed these particular subshifts in the system by the universality of subshifts of finite type.  Under some conditions the specification may be embedded in the system via a closed set valued upper semicontinuous map. For subshifts with the weak specification one can derive a (single valued) continuous embedding proving Theorem \ref{top}. For systems with the almost weak specification property (proof of  Theorem \ref{fin}) we need to develop a multiscale limiting approach (we use the specification property at smaller and smaller scales $\epsilon$) to build an almost Borel embedding.

The present paper is organized as follows. The main tools (in particular the admissible specifications) are introduced  in Section 2, which ends with the proof of Theorem 1. The section 3 is devoted to the proof of Theorem 2  and we discuss in the last section the question of  fully almost Borel universal systems.

\section{Topological embedding  in systems with the specification property}

\subsection{Intermediate subshifts of finite type} We let $\mathbb{N}^*$ be the set of positive integers. A  nondecreasing function  $L:\mathbb{N}\rightarrow \mathbb{N}^*$ with $\lim_{n\rightarrow +\infty}\frac{L(n)}{n}=0$ is said to be \textbf{sublinear}. Given a topological system $(Y,S)$ and a sublinear function $L$ we call a \textbf{$L$-specification} of $(Y,S)$ any subshift of finite type $(\Lambda,\sigma)$ as follows. 

There exist  positive integers $m,l$, a  finite set $E\subset Y$, an integer valued function $N:E\rightarrow \mathbb{N}$ on $E$ with $l\geq L\left(\max_{x\in E}N(x)\right)$, a distinguished point 
$o\in Y$, called the \textbf{marker point }of $\Lambda$,  and an added symbol $*\notin Y$, such that $\Lambda\subset \left(Y\cup \{*\}\right)^\mathbb{Z}$ is the subshift of finite type  generated by the family of  words $(w_x)_{x\in E}$ where $w_x$ is defined for all $x \in E$ by $$w_x:=(o,So,...,S^mo,*^{L(m)},x,Sx,...,S^{N(x)}x, *^l).$$ The word of length $n\in \mathbb{N}^*$ given by $n$ consecutive symbols $*$ is here denoted  by $*^n$. In other terms the elements of $\Lambda$ are given by  the infinite concatenations of such words. The special symbol $*$ labels the places used to \textit{glue} the finite orbits of $x$ and $o$ via the specification property. When  the lengths of the generating words $(w_x)_{x\in E}$ are multiplicatively independent, the $L$-specification of $(Y,S)$ is a  topologically mixing  subshift of finite type.  
We recall that the shift map $\sigma:\Lambda\rightarrow \Lambda$ is defined by 
$\sigma((u_n)_n)=(u_{n+1})_n$ for all $(u_n)_n\in \Lambda$.

We let  $\underline{N}:=\min_{x\in E}N(x)$ and
 $\overline{N}:=\max_{x\in E}N(x)$. The topological entropy of a $L$-specification is mostly given by the cardinality of the set $E$ when  $m/\overline{N}$ and $l/\overline{N}$ are small enough. More precisely we have:

\begin{lemma}\label{entr} For any sublinear  function  $L$  and for any $\gamma>0$ there is $\delta>0$ so that 
 any $L$-specification  with datas $(E,o,N,m,l)$ satisfying  $\min\left(m/\overline{N},l/\overline{N}\right)<\delta$  has topological entropy in $[\frac{\log \sharp E}{\overline{N}}(1-\gamma),\frac{\log \sharp E}{\underline{N}}]$. 
\end{lemma}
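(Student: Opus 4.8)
The plan is to regard $(\Lambda,\sigma)$ as the subshift whose points are the bi-infinite concatenations of the finite code $\{w_x:x\in E\}$, and to read off $\htop(\sigma)$ from the word lengths
\[
\ell_x:=|w_x|=m+L(m)+N(x)+l+2\qquad(x\in E).
\]
All the relevant information is carried by these lengths and by $\sharp E$, so the first step is to express the entropy purely in terms of them. Since every $w_x$ begins with the common marker block $o,So,\dots,S^mo$ and carries the two runs of stars $*^{L(m)}$ and $*^{l}$, word boundaries in any concatenation can be recovered by locating the marker orbit of $o$; hence the code is uniquely decipherable and distinct finite sequences of code words give distinct words of $\Lambda$.

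Next I would count. Let $a_n$ be the number of concatenations $w_{x_1}\cdots w_{x_k}$ of total length exactly $n$; these obey the renewal recurrence $a_n=\sum_{x\in E}a_{n-\ell_x}$ with $a_0=1$, so their generating function is $\bigl(1-\sum_{x\in E}z^{\ell_x}\bigr)^{-1}$ and $a_n$ grows at exponential rate $\rho^{-1}$, where $\rho\in(0,1)$ is the unique solution of $\sum_{x\in E}z^{\ell_x}=1$ in $(0,1)$. By unique decipherability $a_n\le\sharp\mathcal L_n(\Lambda)$, while conversely any word of length $n$ lies inside a concatenation determined by at most $n/\ell_{\min}+2$ code words together with an offset smaller than $\ell_{\max}$; the two counts therefore share the same exponential rate, giving
\[
\htop(\sigma)=-\log\rho=:h,\qquad \sum_{x\in E}e^{-h\ell_x}=1.
\]

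Writing $K=\sharp E$, $\ell_{\min}=m+L(m)+\underline{N}+l+2$ and $\ell_{\max}=m+L(m)+\overline{N}+l+2$, every summand of the characteristic equation lies in $[e^{-h\ell_{\max}},e^{-h\ell_{\min}}]$, so
\[
K\,e^{-h\ell_{\max}}\le 1\le K\,e^{-h\ell_{\min}},\qquad\text{that is}\qquad \frac{\log K}{\ell_{\max}}\le h\le\frac{\log K}{\ell_{\min}}.
\]
As $\ell_{\min}\ge\underline{N}$, the upper estimate at once yields $h\le\log K/\underline{N}$, which is the right endpoint of the asserted interval.

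It remains to reach the left endpoint, i.e. to see that $\log K/\ell_{\max}\ge(\log K/\overline{N})(1-\gamma)$, equivalently
\[
\frac{\ell_{\max}-\overline{N}}{\overline{N}}=\frac{m+L(m)+l+2}{\overline{N}}\le\frac{\gamma}{1-\gamma}.
\]
This is the only genuine obstacle, and the step where the smallness hypothesis enters: one must force the whole overhead $m+L(m)+l+2$ to be negligible relative to $\overline{N}$. The terms $m/\overline{N}$ and $l/\overline{N}$ are made small by the hypothesis, and these in turn force $\overline{N}$ itself to be large (since $l\ge L(\overline{N})\ge1$), so that $2/\overline{N}$ is negligible too. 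The middle gap is handled by sublinearity through $L(m)/\overline{N}=(L(m)/m)(m/\overline{N})\le\bigl(\sup_{n\ge1}L(n)/n\bigr)\,(m/\overline{N})$, the supremum being finite precisely because $L$ is sublinear. The delicate point I expect is to choose a single $\delta=\delta(L,\gamma)$ that absorbs this $L$-dependent factor uniformly over all admissible data $(E,o,N,m,l)$; once this is arranged the left endpoint follows and the proof is complete.
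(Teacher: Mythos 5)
Your proof is correct, but it takes a genuinely different route from the paper's. The paper never computes the entropy: it proves the lower bound by observing that the concatenation map $E^P\rightarrow \mathcal{L}_{P(\overline{N}+q)}(\Lambda)$ (with $q=m+L(m)+l$) is injective, whence $\htop(\Lambda)\geq \log\sharp E/(q+\overline{N})$, and the upper bound by noting that every word of length $n\underline{N}$ is a subword of a concatenation of $n+1$ generating words, whence $\sharp\mathcal{L}_{n\underline{N}}(\Lambda)\leq \sup_{x\in E}|w_x|\,(\sharp E)^{n+1}$. You instead identify $\htop(\Lambda)$ exactly as $-\log\rho$, with $\rho$ the positive root of the renewal equation $\sum_{x\in E}z^{\ell_x}=1$, and then squeeze the root; this buys more (the exact value of the entropy, and both endpoints in one stroke) at the price of generating-function machinery and of unique decipherability of the code, which you justify somewhat loosely: boundaries cannot in general be recovered ``by locating the marker orbit of $o$'', since nothing in this lemma prevents $o$ from reappearing inside a segment $x,Sx,\dots,S^{N(x)}x$ (the admissibility conditions of Definition \ref{adm} are not hypotheses here); the correct reason is that $*\notin Y$, so aligning two concatenations and comparing the star positions forces successive code words to have equal lengths and hence to coincide --- the same remark is needed for the paper's injectivity claim, which is likewise only asserted. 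On the overhead term, the paper bounds $L(m)\leq L(\overline{N})\leq l$ using the constraint $l\geq L(\overline{N})$ built into the definition of an $L$-specification (together with $m\leq\overline{N}$), getting $q\leq m+2l$, whereas you invoke sublinearity via $C_L:=\sup_{n\geq 1}L(n)/n<\infty$; both yield a $\delta$ depending only on $(L,\gamma)$, and the ``delicate point'' you leave open at the end is in fact already settled by your own displayed inequality: take $\delta=\gamma/\bigl((1-\gamma)(4+C_L)\bigr)$, noting that $l\geq 1$ and $l/\overline{N}<\delta$ force $\overline{N}>1/\delta$, hence $2/\overline{N}<2\delta$. (Note also that, like the paper's proof, you read the hypothesis as $\max\left(m/\overline{N},l/\overline{N}\right)<\delta$; with a literal $\min$ the left endpoint can fail, e.g. when $l$ is small but $m\gg\overline{N}$.)
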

\begin{proof}
The topological entropy of the subshift $(\Lambda,\sigma)$ is given by the exponential growth in $n$ of the cardinality of the set $\mathcal{L}_n(\Lambda)$ of words of $\Lambda$ with length less than or equal to $n$. We let $q=m+L(m)+l$ so that the length $|w_x|$ of $w_x$ for $x\in E$ is equal to $N(x)+q$.  
For any positive integer $P$ the map from $E^P$ to  $\mathcal{L}_{P(\overline{N}+q)}(\Lambda)$ sending $(x_0,...,x_{P-1})$ to  the concatenation $w_{x_0}w_{x_1}\cdots w_{x_{P-1}}$ is injective so that 
\begin{align*}
 \htop(\Lambda)&=\lim_n\frac{1}{n}\log \sharp \mathcal{L}_n(\Lambda),\\
 &\geq \lim_P\frac{1}{P(\overline{N}+q)}\log \sharp E^P=\frac{1}{q+\overline{N}} \log \sharp E.
\end{align*}
We have $q\leq m+L(\overline{N})+l\leq m+2l$ so that for any $\gamma>0$ we get for $m/\overline{N}$ and $l/\overline{N}$ small enough 
$$\htop(\Lambda)\geq \frac{1}{q+\overline{N}} \log \sharp E\geq \frac{\log \sharp E}{\overline{N}}(1-\gamma).$$

In the other hand any word in $\mathcal{L}_{n\underline{N}}(\Lambda)$ is a subword of a concatenation of $n+1$-generating words. Therefore we have 
$\sharp \mathcal{L}_{n\underline{N}}(\Lambda)\leq \sup_{x\in E}|w_x|\sharp E^{n+1}$
and finally we conclude 
\begin{align*}
 \htop(\Lambda)&=\lim_n\frac{1}{n\underline{N}}\log \sharp \mathcal{L}_{n\underline{N}}(\Lambda)\leq \frac{\log \sharp E}{\underline{N}}.
\end{align*}
We note that this last inequality holds true for any $N$. 
\end{proof}

 By Krieger's theorem, which we recall below, we may embed topologically any e.a.z. system  in a toplogically mixing specification with larger entropy.

\begin{lemma}[Krieger \cite{Kri}]\label{kk}
Let $(X,T)$ be an e.a.z. system. Then $(X,T)$ embeds topologically in  any topologically  mixing  subshift of finite type $(Y,S)$ with $\htop(S)>\htop(T)$. In other terms any topologically mixing subshift of finite type is $\{\htop<\htop(S)\}$-universal.
\end{lemma}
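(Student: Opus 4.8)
The plan is first to pass to the symbolic setting. Since $(X,T)$ is expansive and zero-dimensional, the clopen partition $P$ appearing in the definition of expansiveness is a clopen generator, and the associated coding map $X\to P^{\mathbb{Z}}$ is injective precisely because the diameters of the atoms of $\bigvee_{k=-n}^n T^{-k}P$ shrink to $0$; thus $(X,T)$ is topologically conjugate to an aperiodic subshift over a finite alphabet $A$, and I may assume $(X,T)$ \emph{is} such a subshift, with $\htop(T)=h<h':=\htop(S)$. By the entropy assumption the number of $X$-words of length $n$ is at most $e^{(h+\eta)n}$ for every $\eta>0$ once $n$ is large. On the target side, topological mixing of the SFT $(Y,S)$ yields a transition constant $p$ such that any two admissible $Y$-words can be joined by an admissible word of length exactly $p$, while the number of admissible $Y$-words of length $n$ is at least $e^{(h'-\eta)n}$ for large $n$.

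Next I would impose a block structure on $X$ by a marker. Using aperiodicity, the marker lemma produces, for each large $N$, a clopen set $F\subset X$ whose consecutive return times all lie in a bounded window, say $[N,2N)$; equivalently every $x$ carries a canonical bi-infinite decomposition into consecutive blocks $u_j$ of lengths $\ell_j\in[N,2N)$, where the block boundary at the origin depends on only finitely many coordinates of $x$. Because $h'>h$, I fix $\eta<(h'-h)/2$ and $N$ so large that $p/\ell$ is negligible for $\ell\in[N,2N)$; then for every admissible block length $\ell\in[N,2N)$ the count of $X$-words of length $\ell$ is at most the count of admissible $Y$-words of length $\ell-p$. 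This lets me choose, for each such $\ell$, an injective encoding $\Phi_\ell$ of the $X$-blocks of length $\ell$ into admissible $Y$-words of length $\ell-p$.

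I would then define $\phi:X\to Y$ by replacing each block $u_j$ with $\Phi_{\ell_j}(u_j)$ and gluing consecutive encoded words by a length-$p$ transition supplied by mixing, chosen deterministically (for instance the lexicographically least valid bridge) and reserved as a recognizable marker pattern arranged never to occur inside the encoded blocks. Then $\phi$ is a sliding block code, hence continuous with $S\circ\phi=\phi\circ S$, and $\phi(X)$ is a subsystem of $(Y,S)$. Injectivity, and continuity of the inverse on $\phi(X)$, follow by decoding: one locates the reserved markers to recover the lengths $\ell_j$, and then inverts each $\Phi_{\ell_j}$ to recover $u_j$ and hence $x$. Thus $\phi$ is a topological conjugacy of $(X,T)$ onto a subsystem of $(Y,S)$.

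The main obstacle is handling the two constraint systems simultaneously: producing a clopen marker whose return times are confined to a bounded window, and choosing the reserved transition pattern so that it is unambiguously detectable in the image (with no spurious internal occurrences) while the counting inequality still holds uniformly over $\ell\in[N,2N)$. It is exactly here that topological mixing is indispensable, since it furnishes uniform-length bridges between arbitrary coded blocks. Alternatively, once $(X,T)$ is realized as an aperiodic subshift it has no periodic points, so the periodic-point obstructions in Krieger's embedding theorem are vacuous and the entropy inequality $\htop(T)<\htop(S)$ alone yields the embedding; this is the route of \cite{Kri}.
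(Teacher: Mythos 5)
The paper offers no proof of this lemma at all: it is stated as a quoted theorem of Krieger, and the citation \cite{Kri} is the entire argument. So there is strictly nothing to compare against, and your closing remark --- that once $(X,T)$ is coded as an aperiodic subshift the periodic-point hypotheses of Krieger's embedding theorem are vacuous, so the entropy inequality $\htop(T)<\htop(S)$ alone suffices --- is precisely the paper's stance. On that level your proposal is consistent with the paper.

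Judged as a standalone proof, your sketch follows the standard marker/counting/bridging outline of Krieger's argument, and its skeleton is right: symbolic coding via the expansive clopen generator, the marker lemma cutting orbits into blocks of lengths $\ell\in[N,2N)$, injective encodings $\Phi_\ell$ obtained by counting, bridges of uniform length $p$ from mixing, and continuity plus equivariance because everything is a sliding block code. The genuine gap is the step you flag yourself as ``the main obstacle'': you \emph{assert} that the transition pattern can be ``reserved as a recognizable marker pattern arranged never to occur inside the encoded blocks,'' but this is exactly where the whole difficulty of the theorem sits. To make it rigorous you need (i) the non-obvious fact that forbidding a single long word in a topologically mixing SFT lowers the entropy by an arbitrarily small amount, so that the encoded blocks can be chosen inside the word-avoiding sub-SFT while the counting inequality still holds uniformly over $\ell\in[N,2N)$; and (ii) control of occurrences of the reserved word that straddle the junction between an encoded block and a bridge, e.g.\ by taking a marker word with no self-overlaps and routing the bridges so that no spurious copies are created. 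Without (i) and (ii) the decoding argument, and hence injectivity of $\phi$, is not established (continuity of the inverse is free by compactness once injectivity holds, so injectivity is the real issue). Since your proposal ultimately defers these points to \cite{Kri}, it ends where the paper begins: with the citation.
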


We also recall the corresponding theorem due to M.Hochman for  the almost Borel universality.

\begin{lemma}[Krieger \cite{Hoc}]\label{kki}
Let $(X,T)$ be an aperiodic Borel system. Then $(X,T)$almost Borel  embeds  in  any topologically  mixing  subshift of finite type $(Y,S)$ with $\htop(S)>\hbor(T)$. In other terms any topologically mixing subshift of finite type is $\{\hbor<\htop(S)\}$-universal.
\end{lemma}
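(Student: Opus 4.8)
The plan is to build the almost Borel embedding by a Borel coding construction that cuts orbits into blocks, encodes each block using the entropy slack, and exploits the mixing of $S$ to splice the blocks into a genuine point of the subshift of finite type. Throughout, $(Y,S)$ denotes the mixing subshift of finite type and I write $\lambda=\htop(S)$. Since $\hbor(T)=\sup_\mu h(\mu,T)<\lambda$, the first step is to fix $\lambda'$ with $\hbor(T)<\lambda'<\lambda$; then \emph{every} $T$-invariant measure $\mu$ satisfies $h(\mu,T)\le \hbor(T)<\lambda'$, a uniform entropy gap that will leave room to encode.

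Next I would set up the combinatorial substrate on the symbolic side. Using mixing, for each large integer $n$ I would produce a free, markable family $W_n$ of allowed words of $Y$ of common length $n$ with $\frac{1}{n}\log \sharp W_n>\lambda'$, such that (i) any concatenation of words of $W_n$ is again an allowed point of $Y$ — which follows from mixing by prepending a fixed connecting word of bounded length to each member — and (ii) concatenations can be uniquely parsed, for instance by reserving a marker pattern occurring in every $W_n$-word only at its initial coordinate. Such free concatenation subsystems of a mixing subshift of finite type are standard and provide, inside $Y$, room to store $\sharp W_n\approx e^{n\lambda'}$ distinct symbols per block of length $n$ while keeping block boundaries recognizable.

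On the source side I would fix a countable Borel generating partition $\mathcal P$ of $(X,T)$, which exists for any aperiodic Borel system, so that the $\mathcal P$-itinerary map is injective on a full set. For a large scale $N$ I would invoke the Borel marker lemma to partition a full subset of $X$ into Rohlin towers of heights in $[N,2N]$, realizing $X$, off a universally null set, as a Borel suspension with controlled return times. On each tower of height $h$, the observed $\mathcal P$-name of the corresponding orbit segment is, by Shannon--McMillan--Breiman together with $h(\mu,T)\le \hbor(T)<\lambda'$, concentrated for each $\mu$ on about $e^{h(\hbor(T)+o(1))}\ll e^{h\lambda'}$ names; I would inject these typical names into $W_n$-coded words of matching length, splice consecutive towers using the mixing connecting words, and interpose markers so that the boundaries, and hence the full $\mathcal P$-itinerary, can be decoded. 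This yields a Borel, shift-equivariant map $\psi$ that is injective wherever the names are typical and the marker parsing succeeds.

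The main obstacle is securing injectivity on a full set \emph{simultaneously} for all invariant measures at once. The typical-name sets and the error terms in Shannon--McMillan--Breiman depend on $\mu$, and any single scale $N$ controls only a prescribed portion of each measure's mass. I would therefore not settle for one coding but construct a sequence of codings at scales $N_1<N_2<\cdots$, each repairing the non-injectivity left by its predecessor on the part of the space where long-range typicality had not yet set in, and pass to a Borel limit. Organizing this exhaustion so that the limit is a \emph{single} shift-equivariant Borel injection defined on a set of full measure for every $\mu$ — coherently reconciling the scale-dependent tower decompositions and the $W_n$-codings — is the technical heart of the argument; the mixing property and the uniform entropy gap $\hbor(T)<\lambda'<\lambda$ are precisely what make each individual correction step feasible.
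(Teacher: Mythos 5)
The paper does not prove this lemma at all: it is quoted as a known theorem of Hochman \cite{Hoc} (just as Lemma \ref{kk} is quoted from Krieger) and is then used as a black box, via Lemma \ref{fest}, in the proof of Theorem \ref{fin}. So there is no proof in the paper to compare yours against; your attempt has to stand on its own as a proof of Hochman's theorem.

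As such it has a genuine gap, and you have located it yourself: the step you call ``the technical heart'' \emph{is} the theorem, and you have not carried it out. The ingredients you assemble (marker-parsable free word families $W_n$ in the mixing SFT, a countable Borel generator, Borel Rohlin towers of heights in $[N,2N]$, Shannon--McMillan--Breiman against the gap $h(\mu,T)\le\hbor(T)<\lambda'<\lambda$) are all standard and correct, but they only yield, for each \emph{fixed} ergodic measure $\mu$, an equivariant injection defined on a $\mu$-full set --- essentially Krieger's generator theorem, i.e.\ measure-theoretic universality. The whole content of the almost Borel statement is uniformity over the (uncountably many) ergodic measures, whose SMB concentration scales admit no common bound. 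Your proposed remedy --- codings at scales $N_1<N_2<\cdots$ repairing each other, then ``pass to a Borel limit'' --- is exactly where the work lies, and three things must be proved for it: (a) on a set that is full for \emph{every} invariant measure, each point is eventually re-coded only finitely often (or the successive maps converge), which needs a quantitative summability estimate of Borel--Cantelli type rather than the qualitative statement that ``each correction step is feasible''; (b) the limit map is still injective and equivariant --- injectivity is \emph{not} preserved under pointwise limits, so one needs the later codings to refine, not merely perturb, the earlier ones; (c) corrections at scale $N_{k+1}$ must not destroy the marker-parsability of the scale-$N_k$ coding on the part of the space already handled. It is instructive that the present paper faces the same three issues for its Theorem \ref{fin} and resolves them only through the rigid coherence conditions of admissible inverse limits (Definition \ref{admiss}: block codes between scales, the sets $F_k,G_k$, the orbit-capacity bound $\ocap(G_k)<1/2^k$) followed by the Borel--Cantelli argument of Lemma \ref{rat}. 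Without an analogous mechanism your construction does not produce a single Borel injection on a full set, and the statement remains unproved.
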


\subsection{Closed set valued upper semicontinuous embeddings of admissible specifications}
Let $\epsilon>0$ and let $L$ be a sublinear function. For any $y\in Y$ and $\epsilon>0$ we denote the closed ball at $y$ of radius $\epsilon$ by $B(y,\epsilon)$.
The system $(Y,S)$ is said to have the \textbf{$(\epsilon,L)$-specification  property} when for any  $L$-specification $(\Lambda,\sigma)$ and any $x=(x_k)_k\in \Lambda$ the set   
$$\Delta_\epsilon^\Lambda(x):=\bigcap_{k, \  x_k\neq *}S^{-k}\left(B( x_k, \epsilon)\right) \text{ is non empty.}$$

The map $\Delta_\epsilon^\Lambda$ is a closed set valued equivariant map from $(\Lambda,\sigma)$ to $(\mathcal{K}(Y), S)$, where $\mathcal{K}(Y)$ is the set of closed subsets of $Y$ endowed with the Hausdorff distance. We recall that the  Hausdorff distance between $F,K\in \mathcal{K}(Y)$ is given by the infimum of the positive real numbers $r$ such that $F$ and $K$ are respectively  contained in the $r$-neighborhoods of $K$ and $F$. 
For a finite cylinder $w$ in $\Lambda$ we let $\Delta_\epsilon^\Lambda(w):=\bigcup_{y\in w\subset \Lambda}\Delta_\epsilon^\Lambda(y)$. It is also convenient for the purpose of the next subsections to define the  set $\Delta_\epsilon(v)$ for a general finite word $v$ with letters in $Y\cup\{*\}$  as $$\Delta_\epsilon(v):=\bigcap_{m\leq k\leq n, \  v_k\neq *}S^{-k}\left(B( v_k, \epsilon)\right)$$ with $v=v_m...v_n$ and  $v_i\in Y\cup\{*\}$ for $i$ in the interval of integers $\llbracket m,n\rrbracket$ (called the interval of coordinates of $v$). Observe that when $v$ is a word of $\Lambda$ and $[v]$ is the associated cylinder given by $[v]=\{(w_k)_{k\in \mathbb{Z}}\in \Lambda, \ w_i=v_i \text{ for } i\in \llbracket m,n\rrbracket\}$, we have $\Delta_\epsilon^\Lambda([v])\subset \Delta_\epsilon(v)$ but these  two sets may a priori differ. \\

The closed set valued map $\Delta_\epsilon^\Lambda$ is said to be injective when  distinct points have disjoint images.  We consider now a topological system $(Y,S)$ with the $(\epsilon, L)$-specification property and we give sufficient conditions on the specification $\Lambda$ to ensure the injectivity of $\Delta_\epsilon^\Lambda$. For $y\in Y$, $n\in \mathbb{N}$ and $\epsilon>0$ we denote by $B(y,n,\epsilon)$ the $n$-dynamical ball  at $y$ of size $\epsilon$ defined by 
 $$B(y,n,\epsilon):=\bigcap_{0\leq k<n} S^{-k}B(S^ky,\epsilon).$$
 We will also let $r(n,\epsilon, S)$ be the minimal number of $n$-dynamical balls   of size $\epsilon$ which are covering $Y$. We recall the topological entropy of $Y$ is given by
 $$\htop(Y,S):=\lim_{\epsilon\rightarrow 0}\limsup_n\frac{1}{n}\log r(n,\epsilon,S).$$

\begin{defi}\label{adm}
A $L$-specification is said  $\epsilon$-admissible when the associated datas $(E,o,N,m,l)$ satisfies :
\begin{enumerate}
\item $12l<m<\underline{N}$,
\item the set $E$ is a $(N,\epsilon)-$separated, i.e. for all $x\neq y\in E$ we have either $N(x)\neq N(y)$ or $B(x,N(x),\epsilon)\cap B(y,N(y),\epsilon)=\emptyset$,
\item $B(o,m,\epsilon)\cap S^kB(o,m,\epsilon)=\emptyset$ for any $0< k\leq \frac{3}{4}m $,
\item $B(o,m,\epsilon)\cap S^kB(x,N(x),\epsilon)=\emptyset$ for any $-\frac{2}{3}m\leq k\leq N(x)-\frac{2}{3}m$ and for any $x\in E$. 
\end{enumerate}
\end{defi}
According to the fourth item the marker point $o$ does not belong to $E$ when the specification is $\epsilon$-admissible. 
In the next lemma, we show that the map $\Delta_\epsilon^{\Lambda}$ is injective for an $\epsilon$-admissible $L$-specification $\Lambda$. The conditions 3 and 4 in the above definition are respectively used to locate the concatened words $w_x$ in  an element of $\Lambda$ whereas the condition 2 allows us to identify the points $x$. 

\begin{lemma}\label{ent}
Assume a $L$-specification  $\Lambda$ is $\epsilon$-admissible.  Let  $A_1B_1$ (resp. $A_2B_2$)  be the concatenation of $A_1$ and $B_1$ (resp. $A_2$ and $B_2$) with $A_i,B_i$ being generating words of $\Lambda$. We assume moreover the intervals of coordinates $\llbracket m_1,n_1\rrbracket$ and $\llbracket m_2,n_2\rrbracket$  of $A_1$ and $A_2$ are not disjoint. Then 
$$\left[\Delta_\epsilon(A_1B_1)\cap \Delta_\epsilon(A_2B_2)\neq \emptyset\right]\Rightarrow \left[A_1=A_2\right].$$

In particular  the closed set valued map $\Delta_\epsilon^\Lambda$ is injective. 
\end{lemma}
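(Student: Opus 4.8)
The plan is to translate membership of a point $y$ in a set $\Delta_\epsilon(w_xw_{x'})$ into constraints of two kinds: from the $o$-block of a generating word occupying coordinates $\llbracket a,a+m-1\rrbracket$ one reads off $S^ay\in B(o,m,\epsilon)$, and from its $x$-block starting at coordinate $\beta$ one reads off $S^\beta y\in B(x,N(x),\epsilon)$. These are then played against conditions (2)--(4). Write $A_1=w_{a_1}$, $A_2=w_{a_2}$ with $a_1,a_2\in E$; after applying a power of $S$ (the sets $\Delta_\epsilon$ being equivariant) assume $A_1$ begins at coordinate $0$ and, by symmetry of the two concatenations, that $A_2$ begins at coordinate $k:=m_2\geq 0$. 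Since the coordinate intervals meet, $0\le k<|w_{a_1}|$, where I use the coordinate bookkeeping $\beta=m+L(m)$ for the start of any $x$-block and $|w_{a_1}|=m+L(m)+N(a_1)+l$ for the length of $A_1$ (the harmless $\pm1$ ambiguities in the exact block lengths are absorbed by the slack in (1)). Fix $y\in\Delta_\epsilon(A_1B_1)\cap\Delta_\epsilon(A_2B_2)$; the goal is $k=0$ and $a_1=a_2$.

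First I would locate the words, i.e. prove $k=0$. Suppose $k>0$. The $o$-blocks of $A_1$ and $A_2$ give $y\in B(o,m,\epsilon)$ and $S^ky\in B(o,m,\epsilon)$, so $S^ky\in B(o,m,\epsilon)\cap S^kB(o,m,\epsilon)$, contradicting (3) when $0<k\le\tfrac34 m$. For larger $k$ I compare the $o$-block of $A_2$ with the $x$-block of $A_1$: since $S^\beta y\in B(a_1,N(a_1),\epsilon)$ we get $S^ky\in B(o,m,\epsilon)\cap S^{k-\beta}B(a_1,N(a_1),\epsilon)$, forbidden by (4) as soon as $\tfrac13 m+L(m)\le k\le\tfrac13 m+L(m)+N(a_1)$. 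Finally, for $k$ near $|w_{a_1}|$ I use the $o$-block of the next word $B_1$, sitting at coordinate $|w_{a_1}|$: then $S^{|w_{a_1}|}y\in B(o,m,\epsilon)$, and with $k'=|w_{a_1}|-k$ condition (3) is contradicted whenever $0<k'\le\tfrac34 m$, i.e. $|w_{a_1}|-\tfrac34 m\le k<|w_{a_1}|$. These forbidden ranges cover all of $(0,|w_{a_1}|)$; the only tight junction is that the upper end $\tfrac13 m+L(m)+N(a_1)$ of the range from (4) must reach the lower end $\tfrac14 m+L(m)+N(a_1)+l$ of the range from (3), and their overlap is exactly $\tfrac1{12}m-l$, positive precisely by (1) (using $L(m)\le l$). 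Hence $k=0$.

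Once $A_1,A_2$ start at the same coordinate their $o$-blocks and star gaps coincide, so both $x$-blocks begin at $\beta=m+L(m)$ and $S^\beta y\in B(a_1,N(a_1),\epsilon)\cap B(a_2,N(a_2),\epsilon)$. If $N(a_1)=N(a_2)$ this nonempty intersection forces $a_1=a_2$ by the $(N,\epsilon)$-separation (2), giving $A_1=A_2$. To rule out $N(a_1)\neq N(a_2)$, say $N(a_1)<N(a_2)$, note that $B_1$ then begins at $|w_{a_1}|=\beta+N(a_1)+l<|w_{a_2}|$, so its $o$-block lies over the $x$-block of $A_2$ and $S^{|w_{a_1}|}y\in B(o,m,\epsilon)\cap S^{N(a_1)+l}B(a_2,N(a_2),\epsilon)$: this contradicts (4) when $N(a_2)-N(a_1)\ge\tfrac23 m+l$, while when $0<N(a_2)-N(a_1)\le\tfrac34 m$ one compares instead the $o$-blocks of $B_1$ and $B_2$ (at coordinates $|w_{a_1}|$ and $|w_{a_2}|$) through (3). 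The two ranges again overlap in a window of size $\tfrac1{12}m-l>0$ by (1), so every positive value of $N(a_2)-N(a_1)$ is excluded; thus $N(a_1)=N(a_2)$ and $A_1=A_2$. The essential difficulty throughout is exactly this bookkeeping at block boundaries: neither (3) nor (4) alone governs a marker block straddling the junction between consecutive generating words, and the quantitative hypothesis $12l<m$ is what makes the two marker occurrences jointly cover the remaining window.

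For the final assertion, let $z\in\Delta^\Lambda_\epsilon(y)\cap\Delta^\Lambda_\epsilon(y')$ with $y,y'\in\Lambda$. Each element of $\Lambda$ is a bi-infinite concatenation of generating words, and $\Delta^\Lambda_\epsilon(y)\subset\Delta_\epsilon(w)$ for every subword $w$ of $y$. Given any block $A_1$ of $y$, with successor $B_1$, its starting coordinate lies inside some block $A_2$ of $y'$, with successor $B_2$; then the intervals of $A_1$ and $A_2$ meet and $z\in\Delta_\epsilon(A_1B_1)\cap\Delta_\epsilon(A_2B_2)$, so the implication just proved yields $A_1=A_2$. Running this over all blocks shows the two parsings of $y$ and $y'$ agree, hence $y=y'$, so $\Delta^\Lambda_\epsilon$ is injective.
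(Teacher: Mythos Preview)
Your proof is correct and follows essentially the same strategy as the paper's: both locate the generating words by playing the marker block of $A_2$ against (i) the marker block of $A_1$ via condition~(3), (ii) the $x$-block of $A_1$ via condition~(4), and (iii) the marker block of $B_1$ via condition~(3) again, with the numerical condition $12l<m$ ensuring the three forbidden ranges cover the whole interval; then condition~(2) identifies the letters. The only cosmetic difference is that the paper, after proving $m_1=m_2$, obtains $n_1=n_2$ by reapplying the same argument to $B_1,B_2$ (after checking their intervals meet), whereas you redo the case split directly on $N(a_2)-N(a_1)$; the content is the same.
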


\begin{proof}
Let $l\in  \llbracket m_1,n_1\rrbracket \cap \llbracket m_2,n_2\rrbracket$ and $u\in \Delta_\epsilon(A_1B_1)\cap \Delta_\epsilon(A_2B_2)$. We first prove $m_1=m_2$. 
We argue by contradiction : we may assume $m_1<m_2$ by swapping $m_1$ and $m_2$ if necessary. We have the following possibilities: 
\begin{itemize}
\item  $m_1<m_2\leq m_1+3m/4$ or $(n_1-3m/4<) \ n_1-\left(2m/3+l\right) \leq m_2\leq n_1$, then in both cases we have $S^{m_2}u\in B(o,m,\epsilon)\cap   S^kB(o,m,\epsilon)$ for some $k$ with $|k|\leq 3m/4$ contradicting item (3), 
\item  $m_1+3m/4<m_2<n_1-\left(2m/3+l\right)$, then if we let $w_x$ be the generating word given by $A_1$ we get $S^{m_2}u\in  B(o,m,\epsilon)\cap  S^kB(x,N(x),\epsilon)$ for some $k$ with $-2m/3\leq k\leq N(x)-2m/3$ because we have  $m/4+L(m)$ ($<2m/3$). This contradicts item (4).
\end{itemize}

In a similar way we prove now that $n_1=n_2$ (however note that the right boundaries of the intervals of coordinates of   $B_1$ and $B_2$ may differ). In the above proof of  $m_1=m_2$ we just have  used $u\in \Delta_\epsilon(A_1)\cap \Delta_\epsilon(A_2)$ and the fact that the intervals of coordinates of $A_1$ and $A_2$ were not disjoint. Therefore to show $n_1=n_2$ we only need to check that the  intervals of coordinates of $B_1$ and $B_2$ are not disjoint.  Let $w_{x'}$ and $w_y$ be respectively the generating words given by $B_1$ and $A_2$. We argue by contradiction. Without loss of generality we assume that the intervals of coordinates of $A_1$ and $B_1$ are contained in the interval of coordinates of $A_2$, which implies $n_2-n_1\geq \underline{N}+l+m+L(m)$ and thus 
\begin{align*}
N(y)-(n_1-m_1)&=N(y)+m_2-n_1,\\
&= n_2-m_2-(l+m+L(m))+m_2-n_1,\\
&\geq \underline{N}>m.
\end{align*}
From $\Delta_\epsilon(A_1B_1)\cap \Delta_\epsilon(A_2B_2)\neq \emptyset$  we   have  $B(o,m,\epsilon)\cap S^{k}B(y,N(y),\epsilon)\neq \emptyset$ with $0<k=n_1-m_1-(m+L(m))<n_1-m_1<N(y)-m$ so that we get again a contradiction with item (4) in the Definition \ref{adm} of the $\epsilon$-admissibility.

Finally as the points in $E$ are $(N,\epsilon)$-separated by item (2), the sets $\Delta_\epsilon(A_1)$ and $\Delta_\epsilon(A_2)$ are disjoint if and only if $A_1$ and $A_2$ are distinct.
\end{proof}  

We need in our proof to assume $\Delta_\epsilon(A_1B_1)\cap \Delta_\epsilon(A_2B_2)\neq \emptyset $ and not only $\Delta_\epsilon(A_1)\cap \Delta_\epsilon(A_2)\neq \emptyset $ in order to identify the last coordinates of $A_1$ and $A_2$. This comes  from the fact the marker appears only at the beginning of a generating word.

\subsection{Construction of admissible specifications.}
When the integer valued function $N:E\rightarrow \mathbb{N}^*$ satisfies $N(E)=\{n_0,n_0+1\}$ for some $n_0$  and $l=L(n_0+1)$ we say $\Lambda$ is a \textbf{simple specification}.  

\begin{prop}\label{pp}
Let $0<\alpha<\htop(S)$. For small enough $\epsilon>0$  and for any sublinear function $L$ there is an $\epsilon$-admissible simple $L$-specification with topological entropy larger than $\alpha$.
\end{prop}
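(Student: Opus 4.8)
The plan is to realize the data $(E,o,N,m,l)$ of a simple $L$-specification from genuine orbit segments of $(Y,S)$, read the entropy off $\sharp E$ through Lemma~\ref{entr}, and secure the four conditions of Definition~\ref{adm} by discarding the exponentially few segments that are \emph{almost periodic} (to force item (3)) or that \emph{shadow the marker} (to force item (4)). For the parameters, fix $\gamma>0$ and $\beta$ with $\alpha<\beta<\htop(S)$, taking $\beta$ as close to $\htop(S)$ as the estimates below require and $\gamma$ small enough that $\beta(1-\gamma)>\alpha$. By the variational principle choose an ergodic $\mu$ with $h_\mu(S)>\beta$; having positive entropy it is non-atomic and aperiodic. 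Fix $\epsilon>0$ small (this is the $\epsilon$ of the statement); given a sublinear $L$, take $n_0$ large, set $l:=L(n_0+1)$ and let $m$ be a fixed fraction of $n_0$. Sublinearity gives $12l=12L(n_0+1)<m<n_0$ once $n_0$ is large, which is item (1), while $l/\overline{N}=L(n_0+1)/(n_0+1)\to 0$, so Lemma~\ref{entr} applies with $\underline{N}=n_0$, $\overline{N}=n_0+1$. By Katok's entropy formula the number of $(n,\epsilon)$-separated, $\mu$-typical orbit segments is at least $e^{\beta n}$ for all large $n$; these are the candidate blocks.

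For the marker, observe that if $B(o,m,\epsilon)\cap S^kB(o,m,\epsilon)\neq\emptyset$ then any common point forces $d(S^jo,S^{j+k}o)<2\epsilon$ for all $0\le j<m-k$, i.e. the length-$m$ segment of $o$ is $2\epsilon$-almost periodic with period $k$; hence it suffices to take $o$ whose segment is not $2\epsilon$-almost periodic of any period $k\le\tfrac34 m$, and such $o$ exist because (once $\epsilon$ is small and $\beta>\tfrac34\htop(S)$) the almost-periodic segments fill at most $e^{(3/4)\htop(S)m+o(m)}$ of the $e^{\beta m}$ distinguishable $\mu$-typical segments. This gives item (3). Next take a maximal $(n_0,\epsilon)$-separated set of $\mu$-typical points with $N\equiv n_0$; it has $\ge e^{\beta n_0}$ elements and is $(N,\epsilon)$-separated, which is item (2). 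Discard every $x$ some shift of whose $n_0$-orbit $2\epsilon$-shadows the marker on a window of length $\ge\tfrac13 m$ (the non-emptiness in item (4) forces exactly such a window, since for $-\tfrac23 m\le k\le N(x)-\tfrac23 m$ the two segments always overlap in length $\ge\tfrac13 m$). Each such $x$ is pinned to the fixed marker orbit on $\ge\tfrac13 m$ coordinates, so these number at most $e^{\htop(S)(n_0-m/3)+o(n_0)}=o(e^{\beta n_0})$ for $\beta$ near $\htop(S)$ and $m$ a large enough fraction of $n_0$; at least $\tfrac12 e^{\beta n_0}$ survive. Adjoining one further typical point with orbit length set to $n_0+1$ (automatically $(N,\epsilon)$-separated from the rest, having a different $N$-value) makes $N(E)=\{n_0,n_0+1\}$, so $\Lambda$ is simple, and Lemma~\ref{entr} yields $\htop(\Lambda)\ge\frac{\beta n_0-\log 2}{n_0+1}(1-\gamma)>\alpha$ for $n_0$ large.

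The main obstacle is the two discarding estimates: showing that $2\epsilon$-almost-periodic segments and marker-shadowing segments grow at exponential rate strictly below $\beta$. The delicacy is that $2\epsilon$-closeness is weaker than coincidence, so an almost-periodic segment is not recovered from a single period, nor a shadowing orbit from the shadowed window, and the tail (local) entropy at scale $\epsilon$ enters both counts. One controls this by taking $\epsilon$ small and exploiting the numerology of Definition~\ref{adm} — the period cutoff $\tfrac34 m$ and the guaranteed overlap $\tfrac13 m$ — together with $\beta$ chosen near $\htop(S)$, so that the entropy deficits $\tfrac14\htop(S)m$ and $\tfrac13\htop(S)m$ outweigh the local-entropy correction and the polynomial book-keeping from the choice of window and shift.
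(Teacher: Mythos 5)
Your overall skeleton is the same as the paper's (marker point, large separated set, discard the orbit segments that shadow the marker, read the entropy off $\sharp E$ via Lemma~\ref{entr}), but both of your counting claims have genuine gaps, and they are not of the ``bookkeeping'' type you suggest. The worse one is your treatment of item (3). From $B(o,m,\epsilon)\cap S^kB(o,m,\epsilon)\neq\emptyset$ you correctly get $d(S^jo,S^{j+k}o)<2\epsilon$ for $0\le j<m-k$, but the claimed bound $e^{(3/4)\htop(S)m+o(m)}$ on the number of distinguishable such segments is unjustified: $2\epsilon$-almost periodicity with period $k$ does \emph{not} pin the segment near a fixed length-$k$ pattern, because the errors accumulate ($d(S^jo,S^{j+rk}o)$ is only bounded by $2r\epsilon$), so for small $k$ the set of such segments is not covered by $\sim e^{\htop(S)k}$ dynamical balls of any scale comparable to your separation scale. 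Any attempt to count them runs into the number of $(\cdot,\epsilon)$-separated points inside $(\cdot,2\epsilon)$-tubes, i.e.\ entropy below scale $\epsilon$, and for a general finite-entropy system this quantity does \emph{not} tend to $0$ as $\epsilon\to 0$: that is asymptotic $h$-expansiveness, which is exactly the hypothesis of Quas--Soo that this paper is written to remove (see Corollary~\ref{co}). The paper avoids counting altogether: by the Ornstein--Weiss return-time formula (Lemma~\ref{vvo}) one picks a single point $o$ with $h^{OW}(o,2\epsilon)>0$, so the return time of $o$ into $B(o,m/4,2\epsilon)$ is eventually $\geq e^{cm}\gg \tfrac34 m$, and item (3) follows pointwise, with no exponential count of almost-periodic segments ever needed.

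Your count for item (4) has the same local-entropy defect, but for a repairable reason: your separation scale ($\epsilon$) is \emph{smaller} than your shadowing scale ($2\epsilon$). Being ``pinned to the marker orbit on $\geq m/3$ coordinates'' only helps if points pinned on the same window cannot be separated from one another there; at scale $\epsilon$ inside a $2\epsilon$-tube they can be, so the factor $e^{-\htop(S)m/3}$ you claim does not come for free, and the trivial bound only gives $e^{h(\epsilon)m/3}$ on the window, i.e.\ no gain at all when $h(\epsilon)=\htop(S)$. The paper's Lemma~\ref{ds} resolves this purely by ordering the scales correctly: the candidate set $G$ is taken $(n_0,6\epsilon)$-separated while the shadowing is at scale $2\epsilon$ of a \emph{fixed} orbit (the marker's); then any two points of $G$ lying in the same $\epsilon/2$-covering element on the free coordinates and both $2\epsilon$-shadowing $o$ on the window are at $d_{n_0}$-distance $<6\epsilon$, hence equal, so the discarded set $F$ has cardinality at most (covering number of the free coordinates) $\approx e^{(1-\delta/2)\htop(S)n_0+o(n_0)}$ with no local-entropy correction whatsoever. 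This is precisely why Definition~\ref{adm} demands separation only at scale $\epsilon$ while the construction in the paper builds $G$ at scale $6\epsilon$, and why ``for small enough $\epsilon$'' appears in the statement (so that the growth rate of $r(n_0,6\epsilon,S)$ is still close to $\htop(S)$, making $\sharp E\geq \sharp G-\sharp F$ large enough for Lemma~\ref{entr}). So your item (4) step can be saved by adopting the paper's scale hierarchy, but your item (3) step needs a different idea, such as the return-time argument.
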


The proposition will follow from the two following lemmas. We first show the existence of a marker point $o$ satisfying the item (3) in Definition \ref{adm}.

\begin{lemma}\label{vvo}Let $(Y,S)$ be a topological system with $\htop(S)>0$. 
For $\epsilon>0$ small enough there is for large $m$ a point $o=o(m,\epsilon)\in Y$ with $S^kB(o,m,\epsilon)\cap B(o,m,\epsilon) =\emptyset$ for all $0< k\leq 3m/4$.
\end{lemma}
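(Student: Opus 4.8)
The plan is to reduce the desired disjointness to the failure of an approximate periodicity, and then to count how many points of a maximal separated set can be approximately periodic.

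First I would unwind what goes wrong when $o$ is \emph{not} good. If $S^kB(o,m,\epsilon)\cap B(o,m,\epsilon)\neq\emptyset$ for some $0<k\le 3m/4$, pick $z$ in the intersection. Then $z\in B(o,m,\epsilon)$ gives $d(S^jz,S^jo)<\epsilon$ for $0\le j<m$, while $S^{-k}z\in B(o,m,\epsilon)$ gives $d(S^jz,S^{j+k}o)<\epsilon$ for $0\le j<m-k$; combining these by the triangle inequality yields $d(S^jo,S^{j+k}o)<2\epsilon$ for all $0\le j<m-k$, i.e. $S^ko\in B(o,m-k,2\epsilon)$. Writing $R_k:=\{o:\ S^ko\in B(o,m-k,2\epsilon)\}$, I have shown that it suffices to find a point $o\notin\bigcup_{0<k\le 3m/4}R_k$: any such $o$ is automatically the marker point we want.

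Next I would count. Set $h(\rho):=\lim_n\tfrac1n\log r(n,\rho,S)$ (the limit exists by subadditivity of $\log r(\cdot,\rho,S)$ and $h(\rho)\uparrow\htop(S)$ as $\rho\to0$). Since $\htop(S)>0$, I fix once and for all a scale $\rho_0$ with $h(\rho_0)>\tfrac34\htop(S)$, and then take $\epsilon$ small with $\epsilon\ll\rho_0$. Let $F$ be a maximal $(m,\rho_0)$-separated set, so $\sharp F\ge e^{m(h(\rho_0)-o(1))}$ for large $m$. It is then enough to prove $\sum_{0<k\le 3m/4}\sharp(F\cap R_k)<\sharp F$, as this forces some $o\in F$ to lie outside every $R_k$.

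The bulk of the sum is easy. If $o\in R_k$ then the near-periodicity $d(S^jo,S^{j+k}o)<2\epsilon$ ($0\le j<m-k$) lets me recover every coordinate $S^jo$ with $3m/4\le j<m$, up to the coarse scale $\rho_0$, from the coordinates in $[0,\lfloor3m/4\rfloor)$: pulling $j$ back by $p\le m/(4k)+1$ multiples of $k$ lands in $[0,3m/4)$ with accumulated error $<2p\epsilon$. Hence two points of $F\cap R_k$ lying in a common $(\lfloor3m/4\rfloor,\epsilon)$-ball are $(m,\,(m/k)\epsilon+5\epsilon)$-close, so they coincide as soon as $(m/k)\epsilon+5\epsilon<\rho_0$, i.e. $k>\kappa':=\lceil m\epsilon/(\rho_0-5\epsilon)\rceil$. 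For these $k$ this gives $\sharp(F\cap R_k)\le r(\lfloor3m/4\rfloor,\epsilon,S)$, and summing over $k\le 3m/4$ produces a term of exponential rate at most $\tfrac34\htop(S)<h(\rho_0)$, which is negligible against $\sharp F$ for large $m$.

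The main obstacle is the remaining range $k\le\kappa'=O(\epsilon m)$, where the near-period is so short that the per-period drift of size $2\epsilon$ accumulates past $\rho_0$ and the short-window reconstruction above breaks down. The point to exploit is that such an orbit is quasi-$k$-periodic \emph{at the fine scale} $\epsilon\ll\rho_0$, so it carries little complexity: it is essentially determined by a length-$k$ ``seed'' together with a slowly varying drift, a $2\epsilon$-step walk of $\lfloor m/k\rfloor$ steps. Counting these configurations against the fixed scale $\rho_0$ (using the finite doubling constant $C(\rho_0)$ of the compact space at that single scale) bounds $\sharp(F\cap R_k)$ by $e^{O(\epsilon)m}$ uniformly over $k\le\kappa'$; as there are only $O(\epsilon m)$ such $k$, their total contribution is again $e^{O(\epsilon)m}\ll e^{m h(\rho_0)}=\sharp F$ once $\epsilon$ is small. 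This drift estimate for small $k$ is the technical heart of the argument; with it in hand, $\sum_{0<k\le 3m/4}\sharp(F\cap R_k)<\sharp F$ for all large $m$, which yields the required marker point $o$.
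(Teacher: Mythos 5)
Your proposal is correct in substance but follows a genuinely different route from the paper. The paper's proof is ergodic-theoretic and very short: it picks an ergodic measure $\mu$ with $h(\mu)>0$, invokes the Ornstein--Weiss return-time formula $h(\mu)=\lim_{\epsilon\to 0}h^{OW}(\mu,2\epsilon)$ to produce a single point $o$ whose first return time to $B(o,n,2\epsilon)$ grows exponentially in $n$, and then uses exactly the same triangle-inequality reduction as you do (disjointness follows once $S^ko\notin B(o,m/4,2\epsilon)$ for $0<k\leq 3m/4$). Your argument replaces this measure-theoretic input by a purely topological count of approximately periodic points against a maximal $(m,\rho_0)$-separated set; the reduction and the large-$k$ regime $k>\kappa'\sim\epsilon m/\rho_0$ are complete and correct (in particular the key fact $h(\epsilon)\leq \htop(S)$ for every $\epsilon$, so that range contributes rate at most $\tfrac34\htop(S)<h(\rho_0)$). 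One minor inaccuracy: $\log r(n,\rho,S)$ is not subadditive at a fixed scale (concatenating covers doubles the scale), so the limit defining $h(\rho)$ need not exist; working with $\limsup$ along a subsequence of $m$'s is harmless, since the paper's own proof likewise yields only arbitrarily large $m$. What each approach buys: the paper's is a few lines and gives one marker point good for infinitely many $m$; yours is elementary, needing no invariant measures and no Ornstein--Weiss theorem, at the cost of the two-regime counting.

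The small-$k$ regime, which you rightly flag as the technical heart, is completable, but only if the ``seed plus drift'' count is organized the right way, and your one-line justification via ``the doubling constant $C(\rho_0)$'' hides a trap. The correct implementation: for $o\in R_k$, split $\llbracket 0,m-1\rrbracket$ into the $k$ arithmetic progressions of step $k$; along each one the orbit is a $2\epsilon$-Lipschitz path in $(Y,d)$, hence at scale $\rho_0$ it is determined by its values at epoch times spaced $\tau\sim \rho_0/(8\epsilon)$ apart, each value recorded by a point of a fixed $(\rho_0/8)$-net $\mathcal{N}$ of $Y$. Since the $k$ progressions together contain exactly $m$ coordinates, the number of recorded net points is at most $m/\tau+k\leq C\epsilon m/\rho_0$ for $k\leq \kappa'$, whence $\sharp(F\cap R_k)\leq (\sharp\mathcal{N})^{C\epsilon m/\rho_0}=e^{O_{\rho_0}(\epsilon)m}$ uniformly in $k\leq\kappa'$, which is your claimed bound. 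The trap to avoid: if instead one counts the walk of period blocks step by step, bounding the $\rho_0$-distinguishable successors of a block by the $\rho_0$-covering number of a $2\epsilon$-ball in the Bowen metric $d_k$, one is estimating a tail-entropy quantity which can grow exponentially in $k$ at a rate that does not vanish as $\epsilon\to 0$ unless $(Y,S)$ is asymptotically $h$-expansive --- precisely the extra hypothesis of Quas--Soo that this paper is designed to remove. Your instinct to use only static covering data of $Y$ at the single scale $\rho_0$ is the right one, and with the bookkeeping above your proof closes.
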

\begin{proof}
It is an easy consequence of Ornstein-Weiss return time formula, that  we recall now.  For any $\epsilon>0$ and $x\in Y$ we  let $$h^{OW}(x,\epsilon):=\limsup_n\frac{1}{n}\log \min\{k>0, \ S^kx\in B(x,n,\epsilon)\}$$ and then for any ergodic $S$-invariant measure $\mu$ : $$h^{OW}(\mu,\epsilon):=\int h^{OW}(x,\epsilon)d\mu(x).$$  Then Ornstein-Weiss return time \cite{dw} formula states that 
$$h(\mu)=\lim_{\epsilon\rightarrow 0} h^{OW}(\mu,\epsilon).$$
Let's go back to the proof of the lemma. Let $\mu$ be an ergodic measure with positive entropy and $\epsilon>0$ with $h^{OW}(\mu,2\epsilon)>0$. In particular there is $o\in Y $ with $h^{OW}(o,2\epsilon)=\limsup_n\frac{1}{n}\log \min\left\{k>0, \ S^ko\in B(o,n,2\epsilon)\right\}>0$. Then  there are arbitrarily  large integers $m$ such that for any $0\leq k\leq 3m/4$ we have  $S^ko \notin B(o,m/4,2\epsilon)$  and thus
$$S^kB(o,m,\epsilon)\cap B(o,m,\epsilon)\subset B(S^ko,m/4,\epsilon)\cap B(o,m/4,\epsilon) =\emptyset.$$
 This concludes the proof of the lemma. 
\end{proof}


We show now that we can find an  $(N,\epsilon)$-separated set  $E$ with large cardinality with $N(E)=\{n_0,n_0+1\}$ which satisfies property (4) in Definition \ref{adm} (observe that an $(n_0,\epsilon)$-separated set  $E$ is also $(N,\epsilon)$-separated in this case). 

\begin{lemma}\label{ds} Fix $1>\delta>0$ and  $\epsilon>0$. For $n$ large enough, for any $y\in Y$ and for  
 any $(n,6\epsilon)$-separated set  $E$  the subset $F$ of $E$ given by   elements $x \in E$ satisfying 
 $S^lx\in B(S^ky,\delta n/2,2\epsilon)$ for some $(k,l)\in \mathcal{E}$    has cardinality less than  $e^{\sqrt{1-\delta/2}\htop(S)n}$ with 
 $$\mathcal{E}:=\{(k',0), \ 0\leq k'\leq \delta n \}\cup\{ (0,l'), \ 0\leq l'  \leq (1-\delta)n\}.$$
\end{lemma}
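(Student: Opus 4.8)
The plan is to partition $F$ according to the witnessing pair $(k,l)$, to bound each part by a two-sided covering argument, and to cash this in against the window we lose. For $(k,l)\in\mathcal E$ set $F_{k,l}:=\{x\in E:\ S^lx\in B(S^ky,\delta n/2,2\epsilon)\}$, so that $F=\bigcup_{(k,l)}F_{k,l}$ and $\sharp F\le(n+2)\max_{(k,l)}\sharp F_{k,l}$ because $\sharp\mathcal E\le n+2$. The crucial remark is a \emph{pinning} property: if $x,x'\in F_{k,l}$ then for every $0\le j<\delta n/2$ both $S^{l+j}x$ and $S^{l+j}x'$ lie within $2\epsilon$ of $S^{k+j}y$, whence $d(S^{l+j}x,S^{l+j}x')\le 4\epsilon$. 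Thus the orbits of any two points of $F_{k,l}$ stay $4\epsilon$-close throughout the window $I:=\{j:\ l\le j<l+\delta n/2\}$ of length $\delta n/2$.

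Since $E$ is $(n,6\epsilon)$-separated, distinct $x,x'\in F_{k,l}$ are $6\epsilon$-separated at some time $j_0\in\{0,\dots,n-1\}$, and by the pinning property $j_0\notin I$; hence $j_0$ lies in the complement $J:=\{0,\dots,l-1\}\cup\{l+\delta n/2,\dots,n-1\}$, a union of two intervals of lengths $a,b\ge 0$ with $a+b=(1-\delta/2)n$ (for the first family $l=0$, while for the second the bound $l\le(1-\delta)n$ guarantees $l+\delta n/2\le n$). I would then cover $Y$ by $r(a,3\epsilon,S)$ balls $B(\cdot,a,3\epsilon)$ and, after applying $S^{l+\delta n/2}$, cover the image of the second segment by $r(b,3\epsilon,S)$ balls $B(\cdot,b,3\epsilon)$, associating to each $x\in F_{k,l}$ the pair of balls trapping its orbit over $\{0,\dots,l-1\}$ and over $\{l+\delta n/2,\dots,n-1\}$. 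This map has range of size $r(a,3\epsilon,S)\,r(b,3\epsilon,S)$ and is injective on $F_{k,l}$: two points with the same pair are $6\epsilon$-close on $J$ (a common $3\epsilon$-ball forces $6\epsilon$-closeness) and $4\epsilon$-close on $I$, hence $6\epsilon$-close on all of $\{0,\dots,n-1\}$, so equal by separation. Therefore $\sharp F_{k,l}\le r(a,3\epsilon,S)\,r(b,3\epsilon,S)$.

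Finally I convert the covering numbers into entropy. With $h:=\limsup_m\frac1m\log r(m,3\epsilon,S)\le\htop(S)$ and any $\eta>0$, pick $M$ so that $r(m,3\epsilon,S)\le e^{(h+\eta)m}$ for $m\ge M$; since $a+b=(1-\delta/2)n\to\infty$, for $n$ large at most one of $a,b$ is $<M$, and bounding that short factor by $C_M:=r(M,3\epsilon,S)$ (using monotonicity of $m\mapsto r(m,3\epsilon,S)$) gives, uniformly in the splitting, $r(a,3\epsilon,S)\,r(b,3\epsilon,S)\le C_M\,e^{(\htop(S)+\eta)(1-\delta/2)n}$, whence $\sharp F\le(n+2)C_M\,e^{(\htop(S)+\eta)(1-\delta/2)n}$. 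Because $1-\delta/2<\sqrt{1-\delta/2}$ for $\delta\in(0,1)$, when $\htop(S)>0$ one may fix $\eta$ small enough that $(\htop(S)+\eta)(1-\delta/2)<\sqrt{1-\delta/2}\,\htop(S)$; the resulting strict gap in the linear exponents absorbs the prefactor $(n+2)C_M$ for $n$ large, giving $\sharp F<e^{\sqrt{1-\delta/2}\htop(S)n}$. The main obstacle is precisely this last step, namely making the two-interval count yield the sub-maximal rate $(1-\delta/2)\htop(S)$ uniformly over all splits $a+b=(1-\delta/2)n$ (including degenerate ones), which is exactly where losing the window of relative length $\delta/2$ pays off; note the argument uses $\htop(S)>0$, which is in force under the hypotheses of Proposition \ref{pp}.
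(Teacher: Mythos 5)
Your proof is correct and follows essentially the same route as the paper: membership in $F$ pins the orbit on a window of relative length $\delta/2$, the complementary times of total length $(1-\delta/2)n$ are covered by dynamical balls at a scale fine enough that each covering element (combined with the pinning) captures at most one point of the $(n,6\epsilon)$-separated set, and the resulting bound $\mathrm{poly}(n)\,e^{(\htop(S)+\eta)(1-\delta/2)n}$ is absorbed into $e^{\sqrt{1-\delta/2}\,\htop(S)n}$ using $1-\delta/2<\sqrt{1-\delta/2}$ and $\htop(S)>0$. Your version is in fact slightly more careful than the paper's (explicit injectivity of the map to pairs of covering balls, and explicit treatment of degenerate splits $a<M$, where the paper instead uses a uniform constant $C$ with $r(n,\epsilon/2,S)\leq Ce^{\alpha\htop(S)n}$ for all $n$), but the counting argument is the same.
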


\begin{proof}
Let $\alpha>1$ with $\alpha(1-\delta/2)<\sqrt{1-\delta/2}$.  There is a constant $C$ such that $r(n,\epsilon/2,S)\leq Ce^{ \alpha\htop(S)n}$ for all $n$. Then for any $(k,l)\in \mathcal E$  the set $ S^{-l}B(S^{k}y,\delta n/2, 2\epsilon)$ may be covered by a family of $n$-dynamical balls of size $3\epsilon$ with cardinality   less than $C^2 e^{\gamma (1-\delta/2) \htop(S)n}$. Thus the union of all these sets over $(k,l)\in \mathcal{E}$ may be covered by a family of such dynamical balls  with cardinality less than $(1-\delta)\delta n^2C^2 e^{\alpha (1-\delta/2) \htop(S)n}<e^{\htop(S)\sqrt{1-\delta/2}n}$ for $n$ large enough. This concludes the proof as these dynamical balls contain at most one point of $E$.
\end{proof}

We prove now Proposition \ref{pp}. 
\begin{proof}[Proof of Proposition \ref{pp}]
For any $2/3>\delta>0$ we may take $n_0$ so large that $L(n_0+1)/n_0< \delta/8$. 
We let $m$ be the integer part of $3\delta n_0/2$. We take $\epsilon>0$ small enough and  $o$  the marker point of $Y$ given by Lemma \ref{vvo}. Then for any $(n_0,6\epsilon)$-separated set $G$
we consider the subset $F$ of $G$ given by Lemma \ref{ds} and we put $E=G\setminus F$.  We let $N(z)=n_0+1$ for some fixed $z\in E$ and $N(z')=n_0$ for $z'\neq z$ in $E$. We claim the $L$-specification associated to $(E,y,N,m)$ is $\epsilon$-admissible. Let us just check that the item (4) in Definition \ref{adm} is fullfilled. Arguing by contradiction there is  $x\in E$  with $B(o,m,\epsilon)\cap S^kB(x,N(x),\epsilon)\neq\emptyset$ for some $-\frac{2}{3}m\leq k\leq N(x)-\frac{2}{3}m$. Then we distinguish two cases :
\begin{itemize}
\item either $-\frac{2}{3}m\leq k\leq 0$, then we have  $\emptyset \neq B(x,N(x),\epsilon)\cap S^{-k}B(o,m,\epsilon)\subset B(x,m/3,\epsilon)\cap B(S^{-k}o,m/3,\epsilon)$ and thus $x\in B(S^{-k}o,\delta n_0/2,2\epsilon)$ with $0\leq -k\leq \delta n_0$;
\item or $0\leq k \leq N(x)-\frac{2}{3}m$, then we have $\emptyset \neq B(o,m,\epsilon)\cap S^kB(x,N(x),\epsilon) \subset B(o,2m/3,\epsilon)\cap B(S^kx,2m/3,\epsilon))$ and thus $S^kx\in B(o,\delta n_0/2,2\epsilon)$ with $0\leq k\leq (1-\delta) n_0$.
\end{itemize}
In both cases $x $ should belong to $F$ contradicting our assumption. 

By taking $\epsilon$ and $\delta$ small and $n_0$ large enough we may find according to 
Lemma \ref{entr} such a specification with topological entropy arbitrarily close to $\htop(S)$. \end{proof}

A closed set valued map $\phi:X\rightarrow \mathcal{K}(Y)$ is said \textbf{upper semicontinuous} when for all $x\in X$ and for all neighborhoods $V_x\subset Y$ of $\phi(x)$ there is a neighborhood  $U_x\subset X$ of $x$ with $\phi(x')\subset V_x$ for all $x'\in U_x$. From Proposition \ref{pp} and Lemma \ref{ent} we get:

\begin{cor}
For any system $(Y,S)$ with the almost weak specification property and for any e.a.z. system $(X,T)$ there is an upper semicontinous closed set valued  embedding of $(X,T)$ in $(\mathcal{K}(Y),S)$.  
\end{cor}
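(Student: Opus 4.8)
The plan is to produce the embedding as a composition $\Delta_\epsilon^\Lambda \circ \iota$, where $\iota$ is an ordinary topological embedding of $(X,T)$ into a suitably chosen specification $\Lambda$ and $\Delta_\epsilon^\Lambda$ is the closed set valued map of this subsection. First I would fix $\alpha$ with $\htop(T) < \alpha < \htop(S)$ and use the almost weak specification property to pick, for a small $\epsilon > 0$, a sublinear function $L_\epsilon$ realizing the almost $\epsilon$-weak specification property. Taking $\epsilon$ small enough and applying Proposition \ref{pp} with $L = L_\epsilon$, I get an $\epsilon$-admissible simple $L_\epsilon$-specification $(\Lambda, \sigma)$ with $\htop(\Lambda) > \alpha > \htop(T)$. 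Being simple, $\Lambda$ has generating words of the two consecutive lengths $n_0 + q$ and $n_0 + q + 1$ (with $q = m + L_\epsilon(m) + l$); consecutive integers are coprime, hence multiplicatively independent, so $\Lambda$ is a topologically mixing subshift of finite type. Krieger's Lemma \ref{kk} then gives a continuous injective equivariant map $\iota : X \to \Lambda$.

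Next I would verify that $\Delta_\epsilon^\Lambda$ is a well defined injective upper semicontinuous closed set valued equivariant map on $\Lambda$. Equivariance and closedness of values are built into the definition. For nonemptiness, set $C_K(x) := \bigcap_{|k| \leq K,\, x_k \neq *} S^{-k} B(x_k, \epsilon)$; reading $x \in \Lambda$ as a concatenation of generating words exhibits, inside each window $\llbracket -K, K \rrbracket$, finitely many finite orbit segments of $o$ and of points of $E$, separated by the gaps of lengths $L_\epsilon(m)$ and $l \geq L_\epsilon(\overline N)$ prescribed by the definition of an $L_\epsilon$-specification. These gaps are precisely what the almost $\epsilon$-weak specification property requires, so it furnishes an $\epsilon$-shadowing point and $C_K(x) \neq \emptyset$; since the compact sets $C_K(x)$ decrease to $\Delta_\epsilon^\Lambda(x)$, compactness of $Y$ gives $\Delta_\epsilon^\Lambda(x) \neq \emptyset$. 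In short the almost $\epsilon$-weak specification property yields the $(\epsilon, L_\epsilon)$-specification property. Injectivity of $\Delta_\epsilon^\Lambda$ follows from Lemma \ref{ent} using the $\epsilon$-admissibility of $\Lambda$.

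For upper semicontinuity I would again use the nested compact sets $C_K(x) \searrow \Delta_\epsilon^\Lambda(x)$. Given an open neighborhood $V$ of $\Delta_\epsilon^\Lambda(x)$, the sets $C_K(x) \cap (Y \setminus V)$ are compact, decreasing, and have empty intersection, so one of them is empty, say $C_K(x) \subset V$. On the cylinder $U := \{x' \in \Lambda : x'_k = x_k \text{ for } |k| \leq K\}$ we have $\Delta_\epsilon^\Lambda(x') \subset C_K(x') = C_K(x) \subset V$, which is upper semicontinuity.

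Composing, $\Delta_\epsilon^\Lambda \circ \iota$ is equivariant, injective (as $\iota$ is injective and $\Delta_\epsilon^\Lambda$ separates points), upper semicontinuous (being the composite of the continuous $\iota$ with the upper semicontinuous $\Delta_\epsilon^\Lambda$), and closed set valued, hence the required embedding of $(X,T)$ in $(\mathcal{K}(Y), S)$. I expect the two genuinely non-formal points to be the passage from the almost $\epsilon$-weak specification property to the nonemptiness of $\Delta_\epsilon^\Lambda$ and the upper semicontinuity; everything else is assembled from Proposition \ref{pp}, Lemma \ref{ent} and Lemma \ref{kk}.
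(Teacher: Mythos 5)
Your proposal is correct and is essentially the paper's intended argument: the corollary is stated as an immediate consequence of Proposition \ref{pp} (admissible simple specification of entropy $>\htop(T)$, hence a topologically mixing SFT) and Lemma \ref{ent} (injectivity of $\Delta_\epsilon^\Lambda$), composed with the Krieger embedding of Lemma \ref{kk}, exactly as you do. The details you supply — that the almost $\epsilon$-weak specification property yields nonemptiness of $\Delta_\epsilon^\Lambda$ via a nested-compact-sets argument, and the upper semicontinuity of $\Delta_\epsilon^\Lambda$ — are precisely the points the paper leaves implicit, and your treatment of them is sound.
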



\subsection{Selector of a specification, the case of subshifts}

When $(Y,S)$ has the $(\epsilon,L)$-specification property, a  map  $\phi:\Lambda\rightarrow Y$ is said to be a \textbf{$\epsilon$-selector of the specification $\Lambda$} if $\phi(x)\in \Delta^\Lambda_\epsilon(x)$ for all $x\in \Lambda$. By Kuratowski-Ryll-Nardzewski measurable selection theorem a measurable selector always exists, as the closed set value map $\Delta^\Lambda_\epsilon$ is upper semicontinuous. However we are here interested in equivariant selectors, i.e. selectors $\phi$ satisfying $\phi\circ \sigma=S\circ \phi$. Invoking the axiom of choice there always exists an equivariant selector, but we do not know if there always exists a measurable one. We investigate now the existence of continuous equivariant $\epsilon$-selector of the specification $\Lambda$ in $(Y,S)$   when $(Y,S)$ is a subshift. 

\begin{defi}\label{cco} A subshift $(Y,S)$ is said to have the coded almost weak $L$-specification property for a positive  sublinear function $L$ when one can associate to any finite word  $w$ a finite word $\tilde w$ of the form $\tilde{w}=u_wwv_w$ such that :
\begin{itemize}
\item $|u_w|=c$ and  $ |v_w|=L(w)-c$ for some positive integer $c\leq L(0)=\inf_nL(n)$ independent of $w$,
\item for any finite words $w_1, w_2,...,w_n$ of $(Y,S)$ the concatenation $\tilde{w_1}\tilde{w_2}... \tilde{w}_n$ is a word of $(Y,S)$. 
\end{itemize}
We also say that $(Y,S)$ has the coded almost weak specification property when $(Y,S)$ satisfies the coded almost weak $L$-specification property for some sublinear function $L$. Finally when the function $L$ may be chosen equal to a constant then we speak of coded weak specification.   
\end{defi}

We believe that any subshift with the almost weak specification property satisfies the above coded almost weak specification property, but this remains an open  question. For a subshift $(Y,S)$ and an associated specification $\Lambda$ we consider the closed set valued map $\Delta^\Lambda:\Lambda\rightarrow \mathcal{K}(Y)$ defined by 
$\Delta^\Lambda((x_k)_k)=\{(z_k)_k\in Y, \ z_k=x_k \text{ for }x_k\neq *\}$ for any $x\in \Lambda$. It corresponds to the map $\Delta^\Lambda_\epsilon$ for $\epsilon=1$ and a well chosen metric on $Y$.

\begin{lemma}\label{ee}
Let $(Y,S)$ be a subshift satisfying the coded almost weak $L$-specification property. Then any  simple $L$-specification $(\Lambda,\sigma)$ with  datas $(E,o,N,m,l)$ satisfying $L(n_0)=L(n_0+1)$ with $N(E)=\{n_0,n_0+1\}$ admits a continuous equivariant selector.
\end{lemma}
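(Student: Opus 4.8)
The plan is to realise the selector $\phi$ as a sliding block code: at the coordinates of $x\in\Lambda$ carrying a genuine letter of $Y$ it will simply copy that letter, while at the coordinates carrying the marker $*$ it will insert the gluing syllables $u_w,v_w$ furnished by the coded almost weak $L$-specification property of Definition~\ref{cco}. First I would record the block structure of a point $x=(x_k)_k\in\Lambda$. Such an $x$ is an infinite concatenation of generating words, and within each generating word the non-$*$ letters split into two determined blocks: a block $O$ read off the orbit of the marker point $o$, followed by $L(m)$ marker coordinates, then a block $X^{(i)}$ read off the orbit of some $x^{(i)}\in E$, then $l$ marker coordinates. Thus, intrinsically, the non-$*$ coordinates of $x$ form maximal runs which are exactly the determined words $O$ and $X^{(i)}$ (subwords of $o$ and of the $x^{(i)}$, hence genuine words of $Y$), and these runs are separated by runs of $*$ of length $L(m)$ (between an $O$ and the next $X^{(i)}$) or $l$ (between an $X^{(i)}$ and the next $O$).

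The heart of the matter is a length count showing that the marker gaps are exactly large enough to receive the coding syllables. For a determined word $W$ the property gives $\tilde{W}=u_{W}Wv_{W}$ with $|u_W|=c$ and $|v_W|=L(|W|)-c$; I would place $u_W$ in the marker gap immediately preceding $W$ and $v_W$ in the gap immediately following it. The gap between an $O$ and the following $X^{(i)}$ then receives $v_O$ followed by $u_{X^{(i)}}$, of total length $(L(|O|)-c)+c=L(|O|)$, which fills it exactly; the gap between an $X^{(i)}$ and the following $O$ receives $v_{X^{(i)}}$ followed by $u_O$, of total length $L(|X^{(i)}|)$. Here the simple specification hypothesis enters decisively: since $N(E)=\{n_0,n_0+1\}$ and $L(n_0)=L(n_0+1)=l$, the overhead $L(|X^{(i)}|)$ equals $l$ for both possible lengths of $X^{(i)}$, so the gap of length $l$ is also filled exactly. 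The constraint $c\le L(0)=\inf_nL(n)$ guarantees $|v_W|\ge0$ and that the $u$-syllables of length $c$ fit into either gap. Defining $\phi(x)$ to be the resulting bi-infinite concatenation $\cdots\tilde{O}\,\tilde{X}^{(i)}\,\tilde{O}\,\tilde{X}^{(i+1)}\cdots$, every finite subword of $\phi(x)$ lies in some finite concatenation $\tilde{W}_1\cdots\tilde{W}_r$ and is therefore a word of $Y$ by Definition~\ref{cco}; as $Y$ is a subshift, $\phi(x)\in Y$.

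It remains to verify that $\phi$ is a selector and is continuous and equivariant. Since $\tilde{W}$ contains $W$ unchanged on the coordinates occupied by $W$, the point $\phi(x)$ carries at each non-$*$ coordinate exactly the letter prescribed there, so $\phi(x)\in\Delta^\Lambda(x)$. For continuity and equivariance I would check directly that $\phi$ is a sliding block code of bounded radius: to compute $(\phi(x))_k$ one locates, from a fixed finite window of $x$ around $k$, whether $k$ lies in a determined block or in a marker gap, reads off the flanking determined words $W^-$ and $W^+$ bounding the current $*$-run (which are of bounded length), and then returns the appropriate letter of $W^{\pm}$, of $v_{W^-}$, or of $u_{W^+}$ according to the position of $k$ within its gap. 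All these data are recoverable from the $*$/non-$*$ pattern on a window of fixed size, so $\phi$ is continuous, and being given by a shift-invariant local rule it satisfies $\phi\circ\sigma=S\circ\phi$.

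I expect the main obstacle to be precisely the length bookkeeping of the second paragraph: the two marker gaps have the two different lengths $L(m)$ and $l$, and the argument only closes if each gap matches the total overhead $L(|W|)$ of its two flanking determined words. It is exactly here that the simple specification assumptions $N(E)=\{n_0,n_0+1\}$ and $L(n_0)=L(n_0+1)=l$ are indispensable, since without them the gap following an $X^{(i)}$ of length $n_0$ would fail to have size $L(|X^{(i)}|)$ and the gluing syllables would no longer tile the marker coordinates.
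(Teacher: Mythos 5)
Your proposal is correct and takes essentially the same route as the paper: the paper's block code sending each generating word $w_x$ to $\sigma^{-c}\left(\tilde{v}_o\tilde{v}_x\right)$ is precisely your local rule that copies the determined blocks and places $u_W$ at the end of the gap preceding each determined word $W$ and $v_W$ at the start of the gap following it, with the same length bookkeeping via $L(n_0)=L(n_0+1)=l$. The only difference is presentational: the paper phrases the map on generating words and compresses the sliding-block-code verification into one sentence, while you make the gap-filling and the continuity/equivariance check explicit.
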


\begin{proof}
For $x\in E$  (resp. $o$ being the marker point) we let $v_x$ (resp. $v_o$) be the word given by the $N(x)$ first letters of $x$ (resp. $M$ first letters of $o$). With the notations of Definition \ref{cco}  the SFT $\Lambda$ embeds into $Y$  via the map  sending any generating word $w_x$, $x\in E$, to the word $\sigma^{-c}\left(\tilde{v}_o\tilde{v}_x\right)$ of $Y$. Indeed the SFT generated by these last words is a subshift of $Y$ according to the coded almost weak $L$ specification property. Moreover this map is equivariant as the words $w_x$, $x\in E$ and  $\sigma^{-c}\left(\tilde{v}_o\tilde{v}_x\right)$  have the same length. Finally this embedding  defines a continuous selector of $\Lambda$ as we 
have shifted appropriately the interval of coordinates.
\end{proof}

A subshift is said \textbf{synchronised} when there is a word $u$, called the\textbf{ synchronizer word}, such that for any words $w$ and $v$ with $uw$ and $vu$ being words then $vuw$ is also a word. Any subshift with the weak specification property is synchronized \cite{jun}. 

\begin{lemma}\label{dd}
Any subshift $(Y,S)$ with the weak specification property satisfies the  coded specification property.
\end{lemma}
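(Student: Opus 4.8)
Lemma \ref{dd} says: if a subshift $(Y,S)$ has the weak specification property, then it has the coded (weak) specification property in the sense of Definition \ref{cco}. So I need to produce, uniformly over all finite words $w$ of $Y$, a "gluing gadget" $\tilde w = u_w w v_w$ with $|u_w|$ equal to a fixed constant $c$, $|v_w|$ of fixed length (since $L$ is constant for weak specification, say $L\equiv N$), such that arbitrary concatenations $\tilde w_1\tilde w_2\cdots\tilde w_n$ are again words of $Y$. The point is to upgrade the existential gluing guaranteed by specification into an explicit, word-prescribed gluing.

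**The plan.** The natural route is through synchronization. The excerpt has already recorded (just before the lemma) that any subshift with the weak specification property is synchronized, with a synchronizer word $u$: whenever $wu$ and $uv$ are words, so is $wuv$ (I will use whichever orientation of the synchronizer convention the paper fixes). The idea is to use the weak specification property once to attach the synchronizer $u$ to both ends of each word $w$, and then let synchronization do the concatenation for free. Concretely, let $N$ be the (constant) gluing length from the $\epsilon$-weak specification property for the subshift, realized combinatorially: there is a fixed $g$ such that for any words $a,b$ of $Y$ one can find a connecting word of length $g$ producing $a\,\ast\,b$ as a genuine word of $Y$. First I would fix the synchronizer word $u$ and set $c=|u|$ (after checking $c\le L(0)$, which we are free to arrange by taking $L$ large enough, since enlarging a sublinear/constant $L$ only weakens the specification requirement). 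Then for a word $w$ I would define $\tilde w$ by prepending $u$ and appending a buffer that ends in $u$: using specification, glue $w$ to a trailing copy of $u$ through a connecting block, so that $\tilde w$ both begins and ends with the synchronizer $u$ and has a fixed total overhead $L(w)$.

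**Key steps in order.** (i) Fix the synchronizer $u$ of $(Y,S)$ and the constant gluing bound $g$ coming from the weak specification property. (ii) For each word $w$, apply weak specification to the pair of orbit segments realizing $w$ and a segment realizing $u$, obtaining a single word of the form $u\,w\,(\text{filler})\,u$ in which the filler has a \emph{length depending only on} $g$ and $|u|$, hence is uniformly bounded; pad the filler to a fixed length so that $|u_w|=c:=|u|$ and $|v_w|$ has constant length $L-c$, giving $\tilde w=u_wwv_w$ with the prescribed length data of Definition \ref{cco}. (iii) Verify the concatenation property: each $\tilde w_i$ ends with $u$ and the next $\tilde w_{i+1}$ begins with $u$; since $u$ is a synchronizer, the overlap/adjacency of two copies of $u$ lets me conclude $\tilde w_i\tilde w_{i+1}$ is a word, and then inductively $\tilde w_1\cdots\tilde w_n$ is a word. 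This is exactly the "vuw is a word" closure that the synchronized property provides.

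**Main obstacle.** The delicate point is bookkeeping the lengths so that the overhead is a single constant independent of $w$, as Definition \ref{cco} demands ($|u_w|=c$ with $c\le L(0)$ and $|v_w|=L(w)-c$). Weak specification gives a gluing length that is bounded but not a priori equal to a fixed value; I must absorb the variation by choosing $L$ to be a sufficiently large constant and then \emph{padding} the filler block (extending the right buffer $v_w$) to hit the exact prescribed length, while preserving the property that $v_w$ ends in the synchronizer $u$. I also need the left marker $u_w$ to be literally $u$ of length exactly $c$, so I must make sure the synchronizer can be realized as a genuine prefix, not merely glued in; this is where I use that $u$ itself is a word and that the specification property glues it cleanly in front of $w$. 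Once the lengths are pinned down and every $\tilde w$ is sandwiched between copies of the synchronizer, the concatenation property is immediate from synchronization, so the heart of the argument is really this uniform-length normalization rather than any dynamical difficulty.
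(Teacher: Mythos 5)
Your overall strategy (fix a synchronizer word $u$, use one application of weak specification per word $w$, then let synchronization handle concatenations) is the same as the paper's, but two of your design choices break the execution. First, you insist that $u_w$ be literally the synchronizer $u$, i.e.\ that $u$ sit immediately adjacent to $w$. Weak specification cannot deliver this: gluing the orbit segment of $u$ to that of $w$ necessarily leaves a connecting gap of length at least $l$ (the constant in the weak specification property), filled by some word $a_w$ you do not control. What you actually obtain is $u\,a_w\,w\,b_w\,u$, never $u\,w\,(\mathrm{filler})\,u$; the claim that specification ``glues cleanly in front of $w$'' is precisely what fails. By itself this would be harmless, since Definition \ref{cco} allows $u_w$ to depend on $w$ (only its length must be constant): you could simply take $u_w:=ua_w$ with $c=|u|+l$, which is what the paper does.

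Second, and more seriously, your concatenation step (iii) is wrong. You arrange each $\tilde w$ to both begin and end with $u$, so that, writing $\tilde w_i=u\,w_i\,f_i\,u$, the concatenation $\tilde w_i\tilde w_{i+1}=u\,w_i\,f_i\,u\,u\,w_{i+1}\,f_{i+1}\,u$ contains two \emph{adjacent} copies of $u$. The synchronization property only merges two words across a single \emph{shared} copy of $u$: from $vu$ and $uw'$ being words it yields $vuw'$. Applying it with $vu=\tilde w_i$ and $uw'=\tilde w_{i+1}$ produces $u\,w_i\,f_i\,u\,w_{i+1}\,f_{i+1}\,u$, which has one $u$ in the middle and is \emph{not} $\tilde w_i\tilde w_{i+1}$; to reach the latter you would need to know, for instance, that $uu\,w_{i+1}\,f_{i+1}\,u$ is a word, and nothing guarantees that a synchronizer can be doubled. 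The repair is exactly the paper's construction: do not put the trailing $u$ inside $\tilde w$. Set $\tilde w:=u\,a_w\,w\,b_w$, record only that $\tilde w u$ is a word, and prove by induction that $\tilde w_1\cdots\tilde w_k u$ is a word: at each step the single $u$ is shared between the end of $\tilde w_1\cdots\tilde w_k u$ and the head of $\tilde w_{k+1}u$, so synchronization applies and yields $\tilde w_1\cdots\tilde w_{k+1}u$. With these two changes (connector absorbed into $u_w$, trailing $u$ left virtual) your argument becomes the paper's proof.
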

\begin{proof}
 Let $u$ be a synchronizer
word in $Y$.  By the weak specification property there exist a positive integer  $l$ such 
that for any $w$ in $Y$ there are  two words $a_w$ and $b_w$  of length $l$ with $
 ua_wwb_wu$ being a word. The property of the synchronizer clearly implies 
that the map $w\mapsto \widetilde{w}:=ua_wwb_w$ satisfies the condition of coded  weak specification (with $c=|u|+l$ and $L=l$). 
\end{proof}

\begin{proof}[ Proof of Theorem \ref{top}]
We can now prove Theorem \ref{top}. Let $(Y,S)$ be a subshift with the weak specification property. As already observed it is enough  by Lemma \ref{kk} to embed topologically subshifts of finite type with topological entropy arbitrarily close to $\htop(S)$. We build in Proposition \ref{pp}  such subshifts, the admissible simple specifications  $(\Lambda,\sigma)$, for which there is an injective equivariant closed set valued map $\Delta^\Lambda:(\Lambda,\sigma)\rightarrow  (\mathcal{K}(Y),S)$. Finally by Lemma \ref{dd} and Lemma \ref{ee} this map has a continuous equivariant selector, which gives the desired embedding of $(\Lambda,\sigma) $ in $(Y,S)$.
\end{proof}

\section{Almost Borel embedding  subshift  in systems with the specification property}

In the previous section we used the specification property at a fixed given scale. However we were able to prove a topological embedding only for subshifts with the weak specification property.  Here we will use the specification property at any arbitrarily small scale to build an almost Borel embedding in a  given system $(Y,S)$ with the almost weak specification property.

\subsection{Inverse limits}
Let $(X_k, T_k)_{k\in \mathbb{N}}$  be a sequence of topological systems  with semiconjugacies $\pi_k:(X_{k+1}, T_{k+1})\twoheadrightarrow (X_k,T_k)$ for all $k$ (we use the usual notation $\twoheadrightarrow$ to denote a surjective map). The inverse limit $(\varprojlim X_k,T)$ is the topological system given by $$\varprojlim_kX_k:=\{ (x_k)_k\in \prod_kX_k, \ \ \pi_k(x_{k+1})=x_k \text{ for all }k\}$$ with $T$ acting as $T_k$ on the $k^{th}$-coordinate for  all $k$. \\

For any $k\in \mathbb{N}$ we let 
$\eta_k:(\{0,1,...,k+1\}^\mathbb{Z},\sigma)\rightarrow (\{0,1,...,k\}^\mathbb{Z},\sigma)$  be  the surjective equivariant map which replaces the letter $k+1$ with  the letter $k$, i.e. the $l^{th}$-term of the sequence $\eta_k((x_l)_l)$  is equal to $k$ if $x_l= k+1$ and to $x_l$ if not, or equivalently $\eta_k((x_l)_l)=\left(\min(x_l,k)\right)_l$. Fix a nondecreasing sequence $\underline{n}=(n_k)_{k\geq 1}$ of positive integers. Let $Z^0_{\underline{n}}$ be the sequence with all terms equal to zero. We define by induction on $k\geq 1$ the subshift $Z_{\underline{n}}^k$ of $\{0,1,...,k\}^\mathbb{Z}$
as the set of $x=(x_l)_l$ satisfying:
\begin{itemize}
\item $\eta_{k-1}(x)\in Z_{\underline{n}}^{k-1}$, 
\item for two consecutive integers    $p<q$ in $\{i\in \mathbb{Z}, \ x_i=k\}$ the cardinality of  $ \{ i\in \llbracket p,q-1\rrbracket \text{ with }x_i=k-1\}$ is either $n_k$ or $n_{k}+1$.
\end{itemize}

We let $(Z_{\underline{n}},\sigma)$ be the inverse limit $\varprojlim Z_{\underline{n}}^k$ with semiconjugacies given by $(\eta_k)_k$  endowed with the shift 
map on each coordinate. Clearly the topological entropy of $Z_{\underline{n}}$ is arbitrarily small for fast 
enough increasing sequence $\underline{n}$.

\begin{lemma}\label{dal}
Let $\underline{n}=(n_k)$ be a nondecreasing sequence of positive integers. For any aperiodic Borel system $(X,T)$ there is a  Borel map $\psi:X'\rightarrow Z_{\underline{n}}$ satisfying $\psi\circ T=\sigma\circ \psi$ with $X'$ being a full subset of $X$. 
\end{lemma}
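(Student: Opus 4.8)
The plan is to realise $Z_{\underline{n}}$ as a hierarchy of Kakutani--Rokhlin towers inside $(X,T)$ and to read off the symbol sequence from the tower a point sits in at each scale. Concretely, set $B_0=X$ and build by induction on $k\geq 1$ a decreasing sequence of Borel complete sections $X=B_0\supseteq B_1\supseteq B_2\supseteq\cdots$ with the following property: viewing $B_{k-1}$ together with its first return map $S_{k-1}=T_{B_{k-1}}$ as a Borel system, the set $B_k$ is a complete section of $(B_{k-1},S_{k-1})$ whose $S_{k-1}$-return time takes only the two values $n_k+1$ and $n_k+2$. Granting such a sequence, I would define
\[
\psi(x)_l=\max\{k\geq 0:\ T^lx\in B_k\}\qquad(l\in\mathbb{Z}),
\]
so that $\psi(x)_l\geq k$ exactly when $T^lx\in B_k$ and $\psi(x)_l=k-1$ exactly when $T^lx\in B_{k-1}\setminus B_k$. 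Equivariance $\psi\circ T=\sigma\circ\psi$ is immediate, since $\psi(Tx)_l=\psi(x)_{l+1}$, and $\psi$ is Borel because each $B_k$ is.

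The construction of the $B_k$ rests on one sub-lemma, a Borel marker/tiling statement: for any aperiodic Borel system $(Z,R)$ and any integer $a\geq 1$ there is a Borel set meeting every $R$-orbit in a bi-infinite subset all of whose consecutive gaps lie in $\{a,a+1\}$. To prove it I would first invoke the classical Borel marker lemma to obtain a complete section $C$ with all gaps at least $a(a+1)$, and then, inside each gap of integer length $L\geq a(a+1)$, cut the block of $L$ positions into $t=\lceil L/(a+1)\rceil$ consecutive subblocks of lengths $a$ and $a+1$; this is possible precisely because $L\geq a(a+1)$ forces $L/(a+1)\leq t\leq L/a$, whence $L-ta$ subblocks of length $a+1$ and $t(a+1)-L$ of length $a$ have total length $L$. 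Using a fixed deterministic cutting rule makes the subdivision a Borel function of the gap, hence orbit-canonical, and the refined section has all gaps in $\{a,a+1\}$. Applying this with $(Z,R)=(B_{k-1},S_{k-1})$ and $a=n_k+1$ produces $B_k$; since the first return map of an aperiodic system to a complete section is again aperiodic and Borel off an invariant null set, the induction continues, discarding only an invariant null set at each stage.

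It then remains to check $\psi(x)\in Z_{\underline{n}}$ on a full invariant set. Fix $x$ and a level $k$, and let $p<q$ be consecutive positions with $T^px,T^qx\in B_k$. Each $B_{k-1}$-point strictly between $p$ and $q$ lies in $B_{k-1}\setminus B_k$, so it carries symbol $k-1$; by the chosen return times of $B_k$ in $(B_{k-1},S_{k-1})$ their number is $(n_k+1)-1=n_k$ or $(n_k+2)-1=n_k+1$, which is exactly the level-$k$ count in the definition of $Z_{\underline{n}}^k$. Since moreover $\eta_{k-1}(\psi(x))=\min(\psi(x),k-1)$, an induction on $k$ gives $\min(\psi(x),k)\in Z_{\underline{n}}^k$ for every $k$, hence $\psi(x)\in Z_{\underline{n}}$. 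Finally, for every $T$-invariant probability measure the measure of $B_k$ tends to $0$, because the gaps of $B_k$ in $X$ grow without bound (indeed at least geometrically, as they are products of the induced gaps); thus $\bigcap_k B_k$ and its $T$-orbit are null, and on the $T$-invariant full set avoiding this orbit the maximum defining $\psi(x)_l$ is finite for all $l$, so $\psi$ is well defined there.

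The main obstacle I anticipate is precisely the sub-lemma: producing Borel complete sections with gaps controlled \emph{both} from above and below to exactly $\{a,a+1\}$, and then threading this through the countable tower of induced systems $(B_{k-1},S_{k-1})$ while keeping every map Borel and losing only an invariant null set at each stage. Once the marker/tiling sub-lemma is in hand, matching the $\pm1$ bookkeeping with the defining constraints of $Z_{\underline{n}}^k$ is routine.
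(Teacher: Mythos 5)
Your proof is correct and is essentially the paper's own argument: the paper likewise builds a nested sequence of Borel sections $U_1\supseteq U_2\supseteq\cdots$ whose return times under the successive induced maps take two consecutive values (quoting Weiss's embedding into the shift on $\mathbb{N}^{\mathbb{Z}}$ and a Borel version of Alpern's tower lemma for this step, where you instead derive the tiling sub-lemma from the classical marker lemma by a deterministic cutting rule), and it codes a point by its depth of membership, $\psi_k(x)=\left((1_{U_1}+\cdots+1_{U_k})(T^lx)\right)_l$, which is exactly your $\max\{k:\ T^lx\in B_k\}$ truncated at level $k$. The only substantive divergence is bookkeeping, and there yours is the more accurate: with the paper's stated return times $\{n_k,n_k+1\}$ the number of symbols $k-1$ between consecutive occurrences of the symbol $k$ comes out as $n_k-1$ or $n_k$, so your choice $\{n_k+1,n_k+2\}$ is what actually matches the definition of $Z_{\underline{n}}^k$ (a harmless discrepancy in the paper, since $\underline{n}$ is an arbitrary nondecreasing sequence).
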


\begin{proof}
For an aperiodic Borel system $(X,T)$ there is for any positive integer $n_1$ a Borel set $U_1\subset X$ such that the first return in $U_1$ takes value in $\{n_1,n_1+1\}$ and $X'=\bigcup_{k=0}^{n_1+1}T^kU_1$ is a full invariant set. Indeed, firstly any aperiodic Borel system almost Borel embeds on a full set into the shift on $\mathbb{N}^\mathbb{Z}$ \cite{W} and  finally we may use for this shift  a Borel  version of Alpern's towers lemma \cite{GW}. This defines a Borel equivariant map $\psi_1:X\rightarrow Z_{\underline{n}}^1$ by letting $\psi_1(x)=\left(1_{U_1}(T^kx)\right)_k$ for all $x\in X'$. 
Then we may consider the Borel system $(U_1, T_{U_1})$ given by the first return  map in $U_1$. There 
is now a Borel set $U_2\subset U_1$ such that the first return in $U_2$ takes value in $\{n_2,n_2+1\}$ and we 
define now a Borel equivariant map $\psi_2:X\rightarrow Z_{\underline{n}}^2$ with $\eta_2\circ 
\psi_2=\psi_1$ by letting $\psi_2(x)=\left((1_{U_1}+1_{U_2})(T^kx)\right)_k$ for all $x\in X$. By 
pursuing the process we build a sequence of Borel equivariant maps $(\psi_k)_k$ compatible with 
the factor maps $(\eta_k)_k$ of the inverse limit $Z_{\underline{n}}$ so that this sequence 
defines a Borel equivariant map from $(X,T)$ to $(Z_{\underline{n}}, \sigma)$. 
\end{proof}

The lemma below follows easily from the above lemma and Hochman's theorem (Lemma \ref{kki}). 

\begin{lemma}\label{fest} For any nondecreasing sequence $\underline{n}$ of positive integers and any  topologically mixing  subshift of finite type $(Y,S)$ the product system $(Y\times Z_{\underline{n}},S\times \sigma)$ is almost Borel $\{\hbor<\htop(S)\}$-universal. 
\end{lemma}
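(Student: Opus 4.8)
The plan is to reduce the statement to the two ingredients already at hand: Hochman's almost Borel universality of the mixing subshift of finite type $(Y,S)$ (Lemma \ref{kki}), which will handle the first factor, and the equivariant Borel coding into $Z_{\underline{n}}$ furnished by Lemma \ref{dal}, which will handle the second. Since the variational principle gives $\htop(S)=\hbor(S)$, the collection $\{\hbor<\htop(S)\}$ is meaningful, and I fix an arbitrary aperiodic Borel system $(X,T)$ with $\hbor(T)<\htop(S)$; the goal is to almost Borel embed it into $(Y\times Z_{\underline{n}},S\times\sigma)$.

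First I would apply Lemma \ref{kki} to $(X,T)$ and $(Y,S)$, which is legitimate since $\htop(S)>\hbor(T)$. This yields a full subset of $X$ and a Borel injective equivariant map into $Y$; replacing the full subset by $\bigcap_{n\in\mathbb{Z}}T^n(\cdot)$ (still full and now invariant), I obtain a full invariant set $X_1\subset X$ and a Borel injective map $\phi:X_1\rightarrow Y$ with $\phi\circ T=S\circ\phi$. Next I would apply Lemma \ref{dal} to the same system $(X,T)$ and the given sequence $\underline{n}$, obtaining a full invariant set $X_2\subset X$ and a Borel equivariant map $\psi:X_2\rightarrow Z_{\underline{n}}$ with $\psi\circ T=\sigma\circ\psi$. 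The set $X':=X_1\cap X_2$ is then again a full invariant subset of $X$, being the intersection of two such sets.

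I would then define $\Phi:=(\phi,\psi):X'\rightarrow Y\times Z_{\underline{n}}$ by $\Phi(x)=(\phi(x),\psi(x))$. This map is Borel, and it is equivariant since for all $x\in X'$
\[
\Phi(Tx)=(\phi(Tx),\psi(Tx))=(S\phi(x),\sigma\psi(x))=(S\times\sigma)\Phi(x).
\]
It is injective on $X'$ because its first coordinate $\phi$ already is: if $\Phi(x)=\Phi(x')$ then in particular $\phi(x)=\phi(x')$, whence $x=x'$. Thus $\Phi$ is a Borel injective equivariant map defined on a full invariant subset, i.e. an almost Borel embedding of $(X,T)$ into $(Y\times Z_{\underline{n}},S\times\sigma)$, which proves the lemma.

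There is no serious obstacle here; the statement is essentially a packaging of the two cited results. The only points that require a line of care are that the full subsets produced by Lemmas \ref{kki} and \ref{dal} may be taken invariant, so that the equivariance relations make sense and are preserved under intersection, and that injectivity of the product map is inherited from the first coordinate alone. The second factor $Z_{\underline{n}}$ contributes nothing to the injectivity and is carried along only so that the target is a genuine product with $Z_{\underline{n}}$, the extra structure that will be exploited in the multiscale construction of Section 3.
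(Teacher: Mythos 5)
Your proof is correct and is essentially the paper's own argument: the paper gives no written proof at all, merely stating that the lemma ``follows easily'' from Lemma \ref{dal} and Hochman's theorem (Lemma \ref{kki}), and your construction --- the product map $\Phi=(\phi,\psi)$ on the intersection of the two full invariant sets, with injectivity inherited from the first coordinate --- is exactly that combination spelled out. The points of care you flag (passing to invariant full sets, injectivity from $\phi$ alone) are the right ones and are handled correctly.
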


Thus it is  enough to almost Borel embed in $(Y,S)$ such a product system $(X\times Z_{\underline{n}},T\times \sigma)$ with $\htop(T)$ arbitrarily close to $\htop(S)$.  Observe we may also write 
$X\times Z_{\underline{n}}$ as the inverse limit of $(X\times Z_{\underline{n}}^k)_k$. 
In fact we will build in the next subsection an inverse limit of specifications $(\Lambda_k, \sigma)_k$ almost Borel embedable in $(Y,S)$ such that $\varprojlim_k\Lambda_k$ is topologically conjugated to $\Lambda_0\times Z_{\underline{n}}$.

\begin{rem}
By using Krieger's marker lemma, see Lemma 8.5.4 in \cite{Dowb} one proves similarly for an a.e.z. system the existence of  a continuous map $\psi:Y\rightarrow Z_{\underline{n}}$ satisfying $\psi\circ S=\sigma\circ \psi$. Combined with Krieger's theorem  (Lemma \ref{kk})  we get that for any  topologically mixing  subshift of finite type $(Y,S)$ the product system $(Y\times Z_{\underline{n}},S\times \sigma)$ is  $\{\htop<\htop(S)\}$-universal. 
\end{rem}

\subsection{Almost Borel embedding of admissible inverse limit of specifications.}
\subsubsection{Nature of the almost Borel  embedding}\label{dff}
  To any specification $(\Lambda, \sigma)$ of $(Y,S)$ given by a marker point $o$ and a separated set $E$ we may associate  the function $\psi_\Lambda:\Lambda\rightarrow Y$  which maps $z=(z_k)_k\in \Lambda$ to $z_0$ whenever $z_0= S^kx$ for some $x\in E$ and maps $z$ to $o$ otherwise. Note this function is clearly continuous, in fact $\psi_\Lambda$  is  locally constant   and orbitwise injective, i.e. for any $z\neq z'\in \Lambda$ there is $k\in \mathbb{Z}$ with $\psi_\Lambda(\sigma^kz)\neq \psi_\Lambda(\sigma^kz')$. However $\psi_\Lambda$ is  not equivariant, i.e. $\psi_\Lambda\circ \sigma \neq S\circ\psi_\Lambda $. 
  
  The desired embedding of $(\varprojlim_k\Lambda_k,\sigma)$, where $\sigma$ acts here again as the shift map on each coordinate, into $(Y,S)$
will be obtained as the pointwise limit of $(\phi_k)_k$ where  for all $k$ we let $\phi_k:=\psi_{\Lambda_k}\circ p_k$ with  $p_k:\varprojlim_k\Lambda_k\rightarrow \Lambda_k$ being the projection on $\Lambda_k$. Of course there is a priori no reason that such a sequence is converging pointwisely and its limit is a Borel embedding. In the next paragraph we introduce sufficient conditions on the inverse limit  to ensure these properties. Inverse limits of specifications satisfying these conditions will be said \textbf{admissible}.

\subsubsection{Admissible inverse limit}
An admissible inverse limit of specifications  $\varprojlim_k\Lambda_k$ almost embeds in $(Y,S)$ via the family of continuous  maps $(\psi_{\Lambda_k})_k$, in the sense that the following diagram commutes on a larger and larger set within  a smaller and smaller error 
as $k$ is increasing.
$$  \xymatrix{ \cdots \ \Lambda_k  \ar[rd]_{\psi_{\Lambda_{k}}} && \Lambda_{k+1} \ \cdots \ar[ld]^{\psi_{\Lambda_{k+1}}}  \ar[ll]_{\pi_k}\\ & Y }$$

 To be more precise we first recall the notion of orbit capacity introduced in \cite{sw}. For a topological system $(X,T)$ the \textbf{orbit capacity }of a subset $C$ of $X$ is defined as $$\ocap(C):=\lim_n\frac{1}{n}\sup_{x\in X}\sharp \{0\leq k< n, \ T^kx\in C\}.$$
For a Borel subset $C$ one easily checks by using the pointwise ergodic theorem that the orbit capacity of $C$ is larger than $\mu(C)$ for any $T$-invariant probability measure $\mu$. A Borel subset $C$ of $X$ will be called a \textbf{full set} if $\mu(C)=1$ for all $\mu$.

We define now precisely admissible inverse limits of specifications. 
For any $\epsilon>0$ we let $L_\epsilon$ be the sublinear function involved in the definition (on page 2) of the $\epsilon$-almost specification property of $(Y,S)$. 
 For a given  specification $(\Lambda,\sigma)$ and for all $x=(x^q)_q\in \Lambda$ we let $I_\Lambda(x)$ be the interval of integers $ \llbracket m,...,0,...,n \rrbracket$ such that $B_\Lambda(x):=x^m...x^0...x^n$ is a generating word  of $\Lambda$ (thus $I_\Lambda(x)$ is the interval of coordinates of $B_\Lambda(x)$). We also let $J_\Lambda(x)\supset I_\Lambda(x)$ be the interval of coordinates of the concatenation $ \tilde{B}_\Lambda(x)$ of $B_\Lambda(x)$ with the next generating word of $\Lambda$ used to form $x$ (which is just $B_{\Lambda}\left(\sigma^{n+1}(x)\right)$ with the previous notations, so that we have $ \tilde{B}_\Lambda(x)=B_\Lambda(x)B_{\Lambda}(\sigma^{n+1}(x))$).  Finally for a finite subset $I$ of $\mathbb{Z}$  we let $d_I$ be the distance on $Y$ given by $d_I(x,y):=\max_{k\in I}d(S^kx,S^ky)$ for $x,y\in Y$.

 Let $(\epsilon_k)_k$ be a fixed decreasing sequence of positive real numbers  with $\sum_{k>0}\epsilon_k<\epsilon_0/2$ and let $(\epsilon'_k)_k$ be the sequence given for all $k\geq 1$ by  $\epsilon'_k:=\epsilon_0-\sum_{l=1}^k\epsilon_k>\epsilon_0/2>\epsilon_{k+1}$ (we let $\epsilon'_0:=\epsilon_0$).   For a fixed $k$ the specification $\Lambda_k$ will be a $ L_{\epsilon_{k+1}}$-specification. We recall that by the almost weak specification property the system $(Y,S)$ satisfies the $(\epsilon_k, L_{\epsilon_k})$ specification property for all $k$). Moreover the specification $\Lambda_k$ is $\epsilon'_k$-admissible. We will denote by $(E_k,o_k, N_k,m_k,l_k)$ the datas associated to the specification $\Lambda_k$.

\begin{defi}\label{admiss}  With the previous notation an inverse limit $\varprojlim_k\Lambda_k$ of specifications with factor maps $(\pi_k)_k$ will be said to be \textbf{admissible} whenever there exists a sequence $(P_k)_k$ of positive integers with the following properties  for all $k\geq 0$:
\begin{enumerate}
\item  $\Lambda_k$  is a $ L_{\epsilon_{k+1}}$-specification which is $\epsilon'_k$-admissible,
\item $\pi_k:\Lambda_{k+1}\rightarrow \Lambda_k$ is a block code, i.e.
the $\pi_k$-image  of every generating word in $\Lambda_{k+1}$ is a word with the same length given by the finite concatenation of generating words in $\Lambda_k$, 
\item $(\Lambda_k,\sigma)$ is topologically conjugated to $(\Lambda_0\times Z_{\underline{n}}^k,\sigma)$ via a conjugacy map 
$\theta_k:\Lambda_k\rightarrow \Lambda_0\times Z_{\underline{n}}^k$ satisfying $\theta_k\circ \pi_k=(\Id_{\Lambda_0}\times\eta_{k})\circ \theta_{k+1}$ with $\Id_{\Lambda_0}$ being the indentity map on $\Lambda_0$,
\item for $x=(x_{k'})_{k'}\in  \varprojlim_{k'} \Lambda_{k'}$ and $(\phi_k)_k$ as in Subsection \ref{dff},  $$[d(\phi_{k+1}(x),\phi_k(x))\geq\epsilon_k]\Rightarrow [\exists l\in \mathbb{Z} \ \sigma^l(x_{k+1})=o_{k+1} \text{ with }|l|\leq P_{k+1}],$$
\item $$P_k\geq m_k+L(m_k)+l_k+\overline{N_{k-1}}$$ and $$ P_k/\underline{N_k}<\frac{1}{ 2^{k+1}}.$$
  \end{enumerate}
\end{defi}
From the third item it follows that an admissible inverse limit $\varprojlim_k\Lambda_k$ is topologically conjugated to $\Lambda_0\times Z_{\underline{n}}$ through the conjugacy map $(x_k)_k\in \varprojlim_k\Lambda_k\mapsto \left(\theta_k(x_k)\right)_k$. 

We consider for $k\in \mathbb{N}$ the following subsets $F_k$ and $G_k$ of  $ \varprojlim_{k'} \Lambda_{k'}$:  $$F_k:=\{x=(x_{k'})_{k'}\in  \varprojlim_{k'} \Lambda_{k'}, \ \exists l\in \mathbb{Z} \ \sigma^l(x_{k})=o_{k} \text{ with }|l|\leq P_k\}$$
and $$G_k:=\{x=(x_{k'})_{k'}\in  \varprojlim_{k'} \Lambda_{k'}, \ \exists l\in \mathbb{Z} \ \sigma^l(x_{k})=o_{k} \text{ with }|l|\leq 3 P_k\}.$$

 According to the fourth item in the above definition we have $d(\phi_{k+1}(x),\phi_k(x))<\epsilon_k$ for $x\notin F_{k+1}$. It follows also from the inequality  $P_k\geq m_k+L(m_k)+l_k$ that for $x=(x_{k'})_{k'}\notin F_k$  the point $x_k\in \Lambda_k$ belongs to the piece of orbit of some point in $E_k$ (with $E_k$ being the set associated to the specification $\Lambda_k$).  Thus we have $\phi_k\circ \sigma (x)= S\circ \phi_k(x)$. 
If $x\notin G_{k+1}$ then we have $\sigma^j(x)\notin F_{k+1}$ for any $j\in J_{\Lambda_k}(x_k)$ because $P_{k+1}$ is larger than the length of any generating word of $\Lambda_k$. Therefore $d_{J_{\Lambda_k}(x_k)}(\phi_{k+1}(x),\phi_k(x))<\epsilon_k$.  We observe finally that 
$$\ocap\left(G_k\right)\leq \frac{6P_k}{\underline{N_k}}<1/2^k.$$

In the next lemma we show by a Borel-Cantelli argument that the sequence $(\phi_k)_k$ is converging on a full set. The maps $(\phi_k)_k$ are more and more equivariant as they sends finite pieces of orbits of the inverse limit with longer and longer length to pieces of orbit of $(Y,S)$ with the same length. It follows that   the limit map is equivariant. Morevover  it is injective as a consequence of the  $\epsilon'_k$-admissibility of the  $ L_{\epsilon_{k+1}}$-specification $\Lambda_k$.

\begin{lemma}\label{rat}
Let $(\varprojlim_k\Lambda_k,\sigma)$ be an admissible inverse limit of specifications. Then the continuous maps $(\phi_k)_k$, as defined in Subsection \ref{dff}, are converging  on a full set $G$  to an injective  function $\phi$ satisfying  $\phi\circ T=S\circ \phi$.
\end{lemma}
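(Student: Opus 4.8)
The plan is to establish, in order, pointwise convergence of $(\phi_k)_k$ on a $\sigma$-invariant full set, equivariance of the limit, and injectivity, the last being the only substantial point. For convergence I would run a Borel--Cantelli argument against the sets $G_k$. Since orbit capacity dominates the measure of any Borel set and $\ocap(G_k)<1/2^k$, one has $\mu(G_k)<1/2^k$ for every $\sigma$-invariant probability $\mu$; hence $\sum_k\mu(G_k)<\infty$ and the fixed Borel set $\limsup_k G_k$ is $\mu$-null for every such $\mu$. Its complement $G_0$ is therefore a full set, and each $x\in G_0$ lies outside $G_k$ --- so outside $F_k\subset G_k$ --- for all large $k$. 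On $G_0$ the recorded estimate $d(\phi_{k+1}(x),\phi_k(x))<\epsilon_k$ (valid off $F_{k+1}$) then holds for all large $k$, and as $\sum_k\epsilon_k<\infty$ the sequence $(\phi_k(x))_k$ is Cauchy in the compact space $Y$, hence convergent. Intersecting $G_0$ with all of its $\sigma$-iterates --- each full, since invariant measures are $\sigma$-invariant --- yields a $\sigma$-invariant full set $G$ on which I set $\phi:=\lim_k\phi_k$.

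Equivariance follows by passing to the limit. For $x\in G$ both $x$ and $\sigma x$ lie in $G$ and eventually leave every $F_k$, so for all large $k$ one has the exact relation $\phi_k\circ\sigma(x)=S\circ\phi_k(x)$; letting $k\to\infty$ and using the continuity of $S$ together with $\phi(\sigma x)=\lim_k\phi_k(\sigma x)$ gives $\phi\circ\sigma=S\circ\phi$ on $G$.

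The core is injectivity, which I would reduce to Lemma \ref{ent}. Fix $x\in G$ and $k$ so large that $x\notin G_{k'}$ for all $k'\ge k$. Restricting the single-step estimate $d_{J_{\Lambda_{k'}}(x_{k'})}(\phi_{k'+1}(x),\phi_{k'}(x))<\epsilon_{k'}$ to the window $J_{\Lambda_k}(x_k)$ --- legitimate because the block-code structure of the $\pi_{k'}$ (item (2) of Definition \ref{admiss}), together with the growth in item (5), gives the nesting $J_{\Lambda_k}(x_k)\subset J_{\Lambda_{k'}}(x_{k'})$ --- and summing over $k'\ge k$, I obtain $d_{J_{\Lambda_k}(x_k)}(\phi(x),\phi_k(x))\le\sum_{k'\ge k}\epsilon_{k'}<\epsilon_0/2<\epsilon'_k$. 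The goal is to promote this into the membership $\phi(x)\in\Delta_{\epsilon'_k}\big(\tilde B_{\Lambda_k}(x_k)\big)$, i.e. that the orbit of $\phi(x)$ $\epsilon'_k$-shadows the whole two-generating-word block of $x_k$ around coordinate $0$. Granting it, if $\phi(x)=\phi(y)$ for $x,y\in G$ then $\phi(x)\in\Delta_{\epsilon'_k}(\tilde B_{\Lambda_k}(x_k))\cap\Delta_{\epsilon'_k}(\tilde B_{\Lambda_k}(y_k))$; the generating words $B_{\Lambda_k}(x_k)$ and $B_{\Lambda_k}(y_k)$ covering coordinate $0$ have overlapping intervals of coordinates, so Lemma \ref{ent}, applicable since $\Lambda_k$ is $\epsilon'_k$-admissible, forces $B_{\Lambda_k}(x_k)=B_{\Lambda_k}(y_k)$. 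By equivariance $\phi(\sigma^jx)=S^j\phi(x)=\phi(\sigma^jy)$ for every $j$, so applying the same argument at all shifts and at arbitrarily high levels and propagating the equality downward through the block codes $\pi_{k'}$, I recover $x_k=y_k$ at every level $k$, whence $x=y$.

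I expect the genuine difficulty to be precisely the shadowing membership $\phi(x)\in\Delta_{\epsilon'_k}\big(\tilde B_{\Lambda_k}(x_k)\big)$. The error bookkeeping is harmless: the admissibility choice $\epsilon'_k=\epsilon_0-\sum_{l\le k}\epsilon_l>\epsilon_0/2$ against $\sum_{l>0}\epsilon_l<\epsilon_0/2$ keeps the accumulated deviation below $\epsilon'_k$. What requires real care is that telescoping through $\phi_k$ only pins the orbit of $\phi(x)$ directly against the $z$-block symbols, whereas the closeness to the marker ($o_k$) blocks --- which is exactly what Lemma \ref{ent} exploits through items (3) and (4) of Definition \ref{adm} --- must be produced from the finer specifications $\Lambda_{k'}$, $k'>k$, via the inverse-limit and block-code structure rather than from $\phi_k$ itself.
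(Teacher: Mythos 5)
Your proposal reproduces the paper's architecture --- Borel--Cantelli on the sets $G_k$ (via $\mu(G_k)\leq\ocap(G_k)<2^{-k}$) for convergence on a full set, passage to the limit for equivariance, and reduction of injectivity to Lemma \ref{ent} through the memberships $\phi(x)\in\Delta_{\epsilon'_i}\bigl(\tilde B_{\Lambda_i}(x_i)\bigr)$ --- but the step you introduce with ``granting it'' is precisely the substance of the paper's injectivity argument, so there is a genuine gap at the core. What you actually establish is that $S^j\phi(x)$ stays within $\sum_{k\geq i}\epsilon_k$ of $S^j\phi_i(x)$ for $j\in J_{\Lambda_i}(x_i)$; and, as you correctly sense, this cannot suffice, because $S^j\phi_i(x)$ is just the orbit continuation of the $E_i$-point recorded at coordinate $0$, hence agrees with the letters of $\tilde B_{\Lambda_i}(x_i)$ only along the $E_i$-portion of the word containing $0$. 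Lemma \ref{ent}, however, lives entirely on the marker coordinates (items (3) and (4) of Definition \ref{adm}), so a proof that only pins down the $E$-portion proves nothing.

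The paper closes this gap by telescoping the maps evaluated \emph{along the orbit}, $\phi_k(\sigma^jx)$, rather than the shifted images $S^j\phi_k(x)$. For $j\in J_{\Lambda_i}(x_i)$ carrying a letter $v_j\neq *$, the value $\phi_i(\sigma^jx)=\psi_{\Lambda_i}(\sigma^jx_i)$ is read off from the letter of $x_i$ at coordinate $j$ --- marker letters included, which is the reading of $\psi_\Lambda$ under which item (4) of Definition \ref{admiss} is verifiable in Proposition \ref{prp}, where the orbit of each point of $E_{k+1}$ $\epsilon_{k+1}$-shadows \emph{all} non-$*$ letters (marker blocks included) of the underlying $\Lambda_k$-block. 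Moreover the exceptional set in item (4) for the step $k\to k+1$ involves only markers of level $k+1>i$: since $J_{\Lambda_i}(x_i)\subset J_{\Lambda_k}(x_k)$ and $x\notin G_{k+1}$, the observation recorded just before the lemma gives $\sigma^jx\notin F_{k+1}$ for every $j\in J_{\Lambda_i}(x_i)$ and every $k\geq i$ --- in particular also when $j$ sits in the level-$i$ marker block, since such a $j$ lies in $F_i$ but in none of the relevant $F_{k+1}$. Hence $d\bigl(\phi_{k+1}(\sigma^jx),\phi_k(\sigma^jx)\bigr)<\epsilon_k$ for all $k\geq i$; summing and using $\phi(\sigma^jx)=S^j\phi(x)$ (equivariance along the window, valid for the same reason) yields $d\bigl(S^j\phi(x),v_j\bigr)\leq\sum_{k\geq i}\epsilon_k<\epsilon'_i$ at \emph{every} non-$*$ coordinate of $\tilde B_{\Lambda_i}(x_i)$, which is exactly the membership you left unproven. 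So the marker closeness does not have to be ``produced from the finer specifications via the block-code structure'': it is already contained in axiom (4) of Definition \ref{admiss} applied at shifted base points, and this one observation is the ingredient missing from your write-up.
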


The map $\phi$ may be then extended equivariantly  on the whole set $\varprojlim_k\Lambda_k$ (see Remark 14 of \cite{Bud} for details).

\begin{proof}[Proof of Lemma \ref{rat}]
By  Borel-Cantelli Lemma we get $\mu(\limsup_kG_k)=0$ for any $\sigma$-invariant probability measure $\mu$. We let $G$ be the full set  given by the complement of $\limsup_kG_k$. For $x\in G$ we have $d(\phi_k(x),\phi_{k+1}(x))< \epsilon_k$  and $\phi_k\circ \sigma (x)= S\circ \phi_k(x)$ when $k$ is large enough. Consequently the sequence $(\phi_k)_k$ is converging pointwisely on $G$ to a limit  $\phi$ satisfying $\phi\circ T=S\circ \phi$. It remains to show $\phi$ is injective on $G$. For $z=(z_k)_k$ in  $G$ the sequence of  intervals  $\left(I_{\Lambda_k}(z_k)\right)_k$ is going to the whole set $\mathbb{Z}$ of integers, in other terms any bounded interval $K$ is contained in $I_{\Lambda_k}(z_k)$ for large enough $k$.  Moreover 
as $\pi_k$ is a block code, the word $B_{\Lambda_k}(z_k)$ completely determines $z_i$ for $k\geq i$ on 
$I_{\Lambda_k}(z_k)$. Thus for $x=(x_k)_k$ and $y=(y_k)_k$ two distinct points in $G$ there exists an integer $i>1$ such that $B_{\Lambda_k}(x_k)\neq B_{\Lambda_k}(y_k)$ for $k\geq i$. By taking $i$ 
large enough we may also assume  that for $k\geq i$ we have $d_{J_{\Lambda_k}(x_k)}(\phi_k(x ),\phi_{k+1}
(x))<\epsilon_k$ and $d_{J_{\Lambda_k}( y_k)}(\phi_k(y),\phi_{k+1}
(y))<\epsilon_k$. In particular $\phi(x )=\lim_k\phi_k(x)\in \Delta_{\epsilon''_i}\left(\tilde B_{\Lambda_i}(x_i)\right) $ and $\phi(y )=\lim_k\phi_k(y)\in 
\Delta_{\epsilon''_i}\left(\tilde B_{\Lambda_i}(y_i)\right) $ with $\epsilon''_i:=\sum_{i<k}\epsilon_k\leq \epsilon'_i$. By definition of $B_\Lambda$ the interval of coordinates of $B_{\Lambda_i}(x_i)$ and $B_{\Lambda_i}(y_i)$ both contain $0$. As $\Lambda_i$ is 
$\epsilon'_i$-admissible it follows from Lemma \ref{ent} that $\Delta_{\epsilon'_i}\left(\tilde B_{\Lambda_i}(x_i)\right)\cap 
\Delta_{\epsilon'_i}\left(\tilde B_{\Lambda_i}(y_i)\right)  =\emptyset$ since $B_{\Lambda_i}(x_i)$ and $B_{\Lambda_i}(y_i)$ are 
distinct.  Therefore we conclude $\phi(x)\neq \phi(y)$. \end{proof}

\subsection{Construction of admissible inverse limit of specifications.}
We consider a system $(Y,S)$ with the almost specification property.

\begin{prop}\label{prp}
Let $0<\alpha<\htop(S)$. There is an admissible  inverse limit  of specifications 
$\varprojlim_k\Lambda_k$ with $\htop(\Lambda_0)$ larger than $\alpha$.
\end{prop}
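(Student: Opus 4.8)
The plan is to build the inverse limit $\varprojlim_k \Lambda_k$ inductively, so that each $\Lambda_k$ is an $\epsilon'_k$-admissible simple $L_{\epsilon_{k+1}}$-specification conjugate to $\Lambda_0 \times Z^k_{\underline n}$, while controlling the marker-free return statistics well enough to force items (4) and (5) of Definition \ref{admiss}. The base case $\Lambda_0$ comes directly from Proposition \ref{pp}: choosing $\epsilon_0$ small enough and using the $(\epsilon_0, L_{\epsilon_0})$-specification property, I get an $\epsilon_0$-admissible simple $L_{\epsilon_1}$-specification with $\htop(\Lambda_0)>\alpha$. The inductive step is the heart of the matter, and it is where the construction must be engineered so that all five conditions hold simultaneously.

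For the inductive step I assume $\Lambda_k$ has been built with datas $(E_k,o_k,N_k,m_k,l_k)$, $N_k(E_k)=\{n_k^0, n_k^0+1\}$, and I want $\Lambda_{k+1}$ together with a block code $\pi_k:\Lambda_{k+1}\twoheadrightarrow \Lambda_k$. The idea forced by item (3) is that a generating word of $\Lambda_{k+1}$ should be a concatenation of either $n_{k+1}$ or $n_{k+1}+1$ consecutive generating words of $\Lambda_k$ (this realizes the extra tower level of $Z^{k+1}_{\underline n}$ over $Z^k_{\underline n}$); the new marker $o_{k+1}$ is placed by re-running the marker construction of Lemma \ref{vvo} at the finer scale $\epsilon_{k+1}$, and the new separated set $E_{k+1}$ of blocks is extracted via the separation argument of Lemma \ref{ds} applied at scale $\epsilon_{k+1}$ using the $(\epsilon_{k+1}, L_{\epsilon_{k+1}})$-specification property. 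Concretely I would apply Proposition \ref{pp}'s proof at scale $\epsilon_{k+1}$ but with the length parameter $n_{k+1}$ chosen large relative to $\overline{N_k}$, and with $E_{k+1}$ chosen inside the set of admissible concatenations so that $\pi_k$ is a genuine block code. Choosing $n_{k+1}$ large (hence $\underline{N_{k+1}}\approx n_{k+1}\,\underline{N_k}$ large) lets me set $P_{k+1}:=m_{k+1}+L(m_{k+1})+l_{k+1}+\overline{N_k}$ and still satisfy $P_{k+1}/\underline{N_{k+1}}<2^{-(k+2)}$, which is item (5).

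**The crucial step is item (4)**, the one genuinely dynamical condition linking consecutive scales. I need: if $d(\phi_{k+1}(x),\phi_k(x))\geq \epsilon_k$ then $x_{k+1}$ sits within distance $P_{k+1}$ of a marker. Away from the markers, both $\phi_k(x)$ and $\phi_{k+1}(x)$ read off an actual orbit segment of some point of $E_k$ (because $\pi_k$ is a block code and $P_{k+1}$ dominates the generating-word length of $\Lambda_k$, so a marker-free coordinate in $\Lambda_{k+1}$ lies in the interior of a $\Lambda_k$-block), and on that common block the two selectors agree, forcing $d(\phi_{k+1}(x),\phi_k(x))=0<\epsilon_k$. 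Hence any discrepancy $\geq \epsilon_k$ can only occur within $P_{k+1}$ of a boundary, i.e. of an $o_{k+1}$-marker — which is exactly item (4). The main obstacle I anticipate is bookkeeping the interaction of the three scale parameters $(\epsilon_k,\epsilon'_k,\epsilon_{k+1})$: I must check that the $\epsilon'_{k+1}$-admissibility of $\Lambda_{k+1}$ survives the concatenation (that is, that the four clauses of Definition \ref{adm} hold for the longer blocks at the new scale), and that the marker and separation estimates from Lemmas \ref{vvo}–\ref{ds} can be invoked uniformly at every scale with $\htop(S)$ fixed. Once the induction is carried through, item (3) gives $\varprojlim_k \Lambda_k \cong \Lambda_0\times Z_{\underline n}$ and $\htop(\Lambda_0)>\alpha$, completing the proof.
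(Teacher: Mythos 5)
Your proposal has the right skeleton (induction on $k$, generating words of $\Lambda_{k+1}$ built over $n_{k+1}$- or $(n_{k+1}+1)$-blocks of $\Lambda_k$, the choice of $P_k$, discrepancies localized near markers), but it misses the central difficulty that the paper's proof is organized around: \emph{information compression}. A generating word of $\Lambda_{k+1}$ cannot simply be a concatenation of generating words of $\Lambda_k$ — as a generating word of a specification it must have the form $(o_{k+1},\dots,S^{m_{k+1}}o_{k+1},*^{L(m_{k+1})},x,\dots,S^{N(x)}x,*^{l_{k+1}})$, so the prefix and the suffix of the underlying $\Lambda_k$-block get \emph{overwritten} by the new marker segment and by the $*$'s. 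Two distinct blocks differing only on the overwritten portions would then produce the same generating word; the assignment (block $\mapsto$ generating word) is not injective, $\pi_k$ is not well defined as a surjective block code (item (2) of Definition \ref{admiss}), and item (3) — the conjugacy $\theta_{k+1}$ with $\Lambda_0\times Z^{k+1}_{\underline{n}}$, which needs a bijection between $1$-blocks of $\Lambda_{k+1}$ and the $n_{k+1}$- and $(n_{k+1}+1)$-blocks of $\Lambda_k$ — fails. The paper resolves this by carrying along at every level an auxiliary specification $\Lambda''_k$ of strictly larger entropy (via the splitting $E'_k=E_k\coprod E''_k\coprod\{a_k,b_k\}$) and re-encoding the overwritten data injectively into a mid-portion of the word, using a surjection $\Theta_k$ from words of $\Lambda''_{k-1}$ onto $\mathcal{W}^{\Lambda_{k-1}}\left(m_k+L(m_k)+p_k\right)\times \mathcal{W}^{\Lambda_{k-1}}(l_k)$. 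Nothing in your construction plays this role, and without it the whole inverse-limit structure collapses.

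A second, related gap concerns admissibility at level $k+1$. You propose to obtain $o_{k+1}$ and the separation/marker-avoidance clauses by re-running Lemma \ref{vvo} and Lemma \ref{ds} at scale $\epsilon_{k+1}$. But Lemma \ref{ds} buys admissibility by \emph{discarding} a bad subset $F$ of a separated set, and in the inductive step you cannot discard anything: every $n_{k+1}$-block of $\Lambda_k$ must be represented in $E_{k+1}$, otherwise $\pi_k$ is not surjective and the conjugacy fails again. The paper uses a different mechanism: the marker $o_{k+1}$ is chosen to shadow a word $A^qB^q$ built from the two \emph{reserved} points $a_k,b_k$ which lie outside $E_k$, so that the marker and the points of $E_{k+1}$ shadow words made of disjoint families of $\Lambda'_k$-blocks; all four clauses of Definition \ref{adm} at level $k+1$ are then deduced from the $\epsilon'_k$-admissibility of $\Lambda'_k$ via Lemma \ref{ent} — the counting lemmas are only invoked at level $0$. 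Finally, a small but genuine error: away from markers one does not get $d(\phi_{k+1}(x),\phi_k(x))=0$; the point $\phi_{k+1}(x)$ lies on the orbit of a point of $E_{k+1}$ which merely \emph{shadows}, within $\epsilon_{k+1}$, the block read off by $\phi_k(x)$, so the correct statement is $d(\phi_{k+1}(x),\phi_k(x))\leq \epsilon_{k+1}<\epsilon_k$.
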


\begin{proof}We recall  that  $(E_k,o_k, N_k,m_k,l_k)$ denote  the datas associated to the specification $\Lambda_k$.
 We first consider an $\epsilon_0$-admissible simple $L_{\epsilon_1}$-specification $\Lambda'_0$ with topological entropy larger than $\alpha$ as built in Proposition \ref{pp}. We let $n_0,n_0+1$ be the length of the generating words in $\Lambda'_0$. In Proposition \ref{pp} we  may choose the  set $E'_0$ of the specification  $\Lambda'_0$ having  the form $E'_0=E_0\coprod E''_0\coprod \{a_0,b_0\}$ (where $\coprod$ denotes the disjoint union of sets) 
  with $\sharp E''_0 \geq e^{\alpha''n_0}$ and $\sharp E_0\geq e^{\alpha'n_0}$ for 
  $\alpha''>\alpha'>\alpha$. Indeed one only needs to take $E'_0$ with $\sharp E'_0\geq e^{\alpha''n_0}+e^{\alpha'n_0}+2$. We may also assume the  specifications $\Lambda''_0,\Lambda_0\subset \Lambda'_0$ associated to $E''_0, E_0$ are also simple, therefore topologically mixing. Finally for large enough $n_0$ 
 we have  $\htop(\Lambda''_0)>\htop(\Lambda_0)>\alpha$ according to Lemma \ref{entr}.    In particular any inverse limit with $\Lambda_0$ as a first term has topological entropy larger than $\alpha$. We let $n_0,n_0+1$ be the length of the generating words in $\Lambda'_0$, $\Lambda''_0$ and $\Lambda_0$.


By induction we build then for all $k$  a $\epsilon'_k$-admissible $L_{\epsilon_k}$-specification $\Lambda'_k$  
such that the associated set $E'_k$ may be written $E'_{k}=E_k\coprod E''_k\coprod \{a_k,b_k\}$
 and the specifications $\Lambda''_k, \Lambda_k\subset \Lambda'_k$ associated to  $E''_k, E_k$ satisfy  $\htop(\Lambda''_{k-1})\geq \htop (\Lambda''_k)>\htop(\Lambda_k)\geq\htop(\Lambda_{k-1})$. The sequence of specifications $(\Lambda_k)_k$ associated to $E_k$ will define an admissible  inverse limit.

 The points $a_k, b_k\in E_k$ will be used to define the marker point $o_{k+1}$ of the specification $\Lambda_{k+1}$. Let us explain shortly the role of $\Lambda''_k$ in the construction : to get the specification $\Lambda_k$ we will need to compress the information given by $\Lambda_{k-1}$. As the topological entropy of $\Lambda''_{k-1}$ is larger than  that  of $\Lambda_{k-1}$ it can be achieved by building  generating words in $\Lambda_k$ from words in $\Lambda''_{k-1}$. 

 We assume $\Lambda_{i-1}$ and $\pi_{i-2}$   already built for $i\leq k$. In a given specification,  a concatenation of $n$-generating words is called a $n$-block. Generating words in $\Lambda_k$ are obtained from every $n_k$-blocks and $(n_{k}+1)$-blocks in $\Lambda_{k-1}$ after a series of modifications, where $(n_k)_k$ is a fast increasing sequence. In particular the generating words in $\Lambda_k$ have length in the interval of integers $\llbracket \prod_{i=0}^kn_i,\prod_{i=0}^k(n_i+1)\rrbracket$ and all values are achieved (it implies $\Lambda_k$ is topologically mixing). The  conjugacy map $\theta_k:\Lambda_k\rightarrow \Lambda_0\times Z_{\underline{n}}^k$ will be obtained for its first coordinate  by decompressing the information from $\Lambda_k$ to $\Lambda_0$, whereas the  second coordinate gives the structure of blocks in $\Lambda_l$ for $l\leq k$. 

\subsubsection{Marker word in $\Lambda_k$.}

We let $A=A_{k-1}$ and $B=B_{k-1}$ be the generating word in $\Lambda'_{k-1}$ associated to $a_{k-1}$ and $b_{k-1}$. We consider a block $C_k$ in $\Lambda'_{k-1}$ of the form 
$C_k\in \{A^qB^q, \ q\in \mathbb{N}^*\}$ with $A^q=\underbrace{A\cdots A}_{q \text{ times} }$.

  If $\Delta^{\Lambda_{k-1}}_{\epsilon'_{k-1}}(C_k)\cap \Delta^{\Lambda_{k-1}}_{\epsilon'_{k-1}}(\sigma^lC_k)\neq \emptyset$ for some integer $l$ the blocks  $\sigma^lC_k$ and $ C_k$ should  coincide on the intersection of their interval of coordinates (which may be empty) by  $\epsilon'_{k-1}$-admissibility of $\Lambda'_{k-1}$ according to Lemma \ref{ent}.   As $A$ and $B$ are distinct this intersection is  non empty only for $l=0$.   We  put $m_k$ equal to the length of $C_k$ and we pick $o_k\in \Delta^{\Lambda_{k-1}}_{\epsilon_k}(C_k)$. For $0< l\leq \frac{3}{4}m_k$ the intervals of coordinates of $C_k$ and $\sigma^lC_k$ are not disjoint, therefore $\Delta^{\Lambda_{k-1}}_{\epsilon'_{k-1}}(C_k)\cap \Delta^{\Lambda_{k-1}}_{\epsilon'_{k-1}}(\sigma^lC_k)=\emptyset$. Thus  we have 
   \begin{align*}
   B(o_k,m_k,\epsilon'_k)\cap S^lB(o_k,m_k,\epsilon'_k)&\subset \Delta^{\Lambda_{k-1}}_{\epsilon'_{k-1}}(C_k)\cap \Delta^{\Lambda_{k-1}}_{\epsilon'_{k-1}}(\sigma^lC_k)=\emptyset.
   \end{align*}
    The point $o_k$ will play the role of the marker in the specifications $\Lambda_k, \Lambda'_k$ and $\Lambda''_k$.

\subsubsection{Generating words in $\Lambda_k$, $\Lambda'_k$ and $\Lambda''_k$.}
For a $n_k$- or $(n_k+1)$-block $D_k$ in $\Lambda_{k-1}$ or $\Lambda''_{k-1} $ we replace  the first letters by $o_k,...,S^{m_k}o_k,*^{L_{\epsilon_k}(m_k)}$ and the last letters by $*^{l_k}$ with $l_k=L_{\epsilon_k}\left(\prod_{l=0}^k(n_l+1)\right)$. We consider the subword $D'_k$  given by the remaining middle part of $D_k$. 

We associate to the $n_k$- or $(n_k+1)$-block $D_k$  a word $D''_k$ in $\Lambda'_{k-1}$ with the same length as 
$D'_k$ (see the next Subsection \ref{reco}). Then we will pick up $x_k\in \Delta^{\Lambda_{k-1}}_{\epsilon_k}(D''_k)$ and we replace the subword $D'_k$ 
of $D_k$ by $(x_k, Sx_k,...,S^{|D'_k|-1}x_k)$. The set  $E_k$ (resp. $E''_k\coprod \{a_k,b_k\}$)  is  defined as the  set of all points $x_k$ obtained in this way from all $n_k$- and $(n_{k}+1)$-blocks $D_k$ in $
\Lambda_{k-1}$ (resp. in $\Lambda''_{k-1}$). 
  We let $\Lambda_k$ (resp. $\Lambda''_k$) be the 
specification with respect to $\left(E_k \text{ (resp. }E''_k\text{)},   o_k, N_k, m_k,l_k \right)$. The map $D_k\mapsto 
D''_k$, defined below, will be injective on $n_k$- and $(n_{k}+1)$-blocks of $\Lambda_{k-1}$ so that we will obtain a bijective map $\pi_{k-1}$ from the 
$1$-blocks of the specification $\Lambda_k$ to the $n_k$- and $(n_{k}+1)$-blocks of $\Lambda_{k-1}$.   This map induces the required topological conjugacy $\theta_k$  between $\Lambda_{k}$ and $\Lambda_{0}\times Z_{\underline{n}}^k$ (define $\theta_k$ such that we have  $\theta_{k-1}\circ \pi_{k-1}=(\Id_{\Lambda_0}\times\eta_{k-1})\circ \theta_{k}$ and the $n^{th}$-term of the second component of $\theta_k$ in $\{0,1,...,k\}^\mathbb{Z}$ is equal to $k$ if and only if $n$ is the first coordinate of a $1$-block in $\Lambda_k$).

Moreover 
when $\prod_{i=0}^kn_i \gg m_k \gg \prod_{i=0}^{k-1}(n_i+1) $ we will get $\htop(\Lambda_{k-1})\leq  \htop(\Lambda_k)<\htop(\Lambda''_k)
\sim \htop(\Lambda''_{k-1})$.

\subsubsection{Recoding and factor maps $\pi_k$.}\label{reco}
When $D_k$ is a $n_k$- or $(n_k+1)$-block in $\Lambda''_{k-1}$ we just let $D''_k=D'_k$ (there is no compression of information in this case).  We now examine the case when $D_k$ is a $n_k$- or $(n_{k}+1)$-block in $\Lambda_k$. For a specification $\Lambda$ and an integer $p$ we let $\mathcal{W}^\Lambda(p)$ be the set of words of $\Lambda$ of length $p$ and $\mathcal{V}^\Lambda(p)$ be the subset of $\mathcal{W}^\Lambda(p)$ given by words which ends with a generating word. A word  $V\in \mathcal{V}^\Lambda(p)$ may be written (uniquely) as the concatenation of a prefix, which contains no generating words, with a (complete) block $s(V)$ of $\Lambda$, called the suffix of $V$. 

As the topological entropy of $\Lambda''_{k-1}$ is strictly larger than the topological entropy of $\Lambda_{k-1}$ there is an integer $p_k$ with $\prod_{i=0}^kn_i\gg p_k\gg \max(l_k,m_k)$ such that
$$\sharp  \mathcal{V}^{\Lambda''_{k-1}}
(p_k) \geq \sharp  \left(\mathcal{W}^{\Lambda_{k-1}}
\left(m_k+L(m_k)+p_k\right)\times \mathcal{W}^{\Lambda_{k-1}}(l_k)\right).$$ In fact we may replace in the above inequality the set $ \mathcal{V}^{\Lambda''_{k-1}}(p_k)$ by a subset $\mathcal{U}^{\Lambda''_{k-1}}(p_k)\subset  \mathcal{V}^{\Lambda''_{k-1}}(p_k)$ such that any two distinct elements  in this subset have distinct suffixes (indeed the length of the prefixes is of order $\prod_{i=0}^{k-1}n_{i}$ and we choose  $p_k\gg m_k\gg \prod_{i=0}^{k-1}n_{i}$).
We may thus consider a  surjective map $\Theta_k $ from $\mathcal{U}^{\Lambda''_{k-1}}(p_k)$ onto the product set $\mathcal{W}^{\Lambda_{k-1}}\left(m_k+L(m_k)+p_k\right)\times \mathcal{W}^{\Lambda_{k-1}}(l_k)$. 
As explained above  we replace  the first letters of $D_k$ by $o_k,...,S^{m_k}o_k,*^{L_{\epsilon_k}(m_k)}$ and the last letters by $*^{l_k}$ with $l_k=L_{\epsilon_k}\left(\prod_{l=0}^k(n_l+1)\right)$. 
We write $D_k$ as the concatenation $D_k=D_k^1D_k^2D_k^3D_k^4$  with $D'_k=D_k^2D_k^3$ and $|D_k^2|=p_k$. To get $D''_k$ we only modify $D_k^2$. We re-encode the information lost in $D_k^1$, $D_k^2$ and  $D_k^4$ in a word of length $p_k$ via $\Theta_k$. More precisely we let   
 $D''_k$ be the concatenation $\Theta_k^{-1}(D_k^1D_k^2,D_k^4) D_k^3$  for some fixed right inverse  $\Theta_k^{-1}$ of $\Theta_k$. Recall we take then $x_k\in \Delta^{\Lambda_{k-1}}_{\epsilon_k}(D''_k)$ and we replace the subword $D'_k$ 
of $D_k$ by $(x_k, Sx_k,...,S^{|D'_k|-1}x_k)$ to get a generating word of $\Lambda_k$. Any  coordinate in $D_k$ lying in $D_k^3$ and the corresponding coordinate in the generating word of $\Lambda_k$ are $\epsilon_k$-close. In particular the second last item of Definition \ref{admiss}  is satisfied for $P_k:=\max\left(m_k+L(m_k)+p_k, l_k\right)$. Finally the condition on $P_k$, in the last item of the same definition, holds true  for $\prod_{i=0}^kn_i\gg p_k\gg \max(l_k,m_k)$.

The $1$-blocks of the specification $\Lambda_k$ are in bijection  with the $n_k$- and $(n_k+1)$-blocks of $\Lambda_{k-1}$.   The (surjective) factor map $\pi_{k-1}:\Lambda_k\rightarrow \Lambda_{k-1}$ is the associated block code.

\subsubsection{Admissibility of $\Lambda'_k$.}
We prove now by induction on $k$ that $\Lambda'_k$ (and therefore $\Lambda_k$) is $\epsilon'_k$-admissible.  We first check $E'_k=E_k\coprod E''_k\coprod \{a_k, b_k\}$ is $(N,\epsilon'_k)$-separated. 
 Let $x\neq y\in E'_k$ with $N_k(x)=N_k(y)$. The points $x$ and $y$ belongs respectively 
to $\Delta^{\Lambda_{k-1}}_{\epsilon_k}(D''_k)$ and $\Delta^{\Lambda_{k-1}}_{\epsilon_k}(\tilde D''_k)$ with $D''_k$ and $
\tilde{D}''_k$ being words on the same interval of coordinates with distinct suffixes. By  $\epsilon'_{k-1}$-admissibility of $\Lambda'_{k-1}$ the sets   $\Delta_{\epsilon'_{k-1}}(D'_k)$ and $ \Delta_{\epsilon'_{k-1}}(\tilde D'_k)$ are disjoint according to Lemma \ref{ent} and thus it is also the case for  $B(x,N_k(x),\epsilon'_k)$ and $B(y,N_k(y),\epsilon'_k)$.

The property required on the marker point having already being checked, it is enough to see 
$B(o_{k},m_k,\epsilon'_k)\cap S^iB(x,N_k(x),\epsilon'_k)=\emptyset$ for any $-\frac{2}{3}m_k\leq i\leq N_k(x)-\frac{2}{3}m_k$ and for any $x\in E'_k$. But this follows as above from the $\epsilon'_{k-1}$-admissibility of $\Lambda'_{k-1}$ and the fact that $o_k$ and elements of $E'_k$ are issue from disjoint families  of $1$-blocks in $\Lambda'_{k-1}$ (associated to $\{a_{k-1}, b_{k-1}\}$ for $o_k$ and  to $E_{k-1}\coprod E''_{k-1}$ for elements in $E'_k$). This concludes the proof of Proposition \ref{prp}.
\end{proof}

\subsection{Almost Borel universality for systems with the almost weak specification}
\begin{proof}[Proof of Theorem \ref{fin}]
Let $(Y,S)$ be a topological system with the almost weak specification property. By Proposition \ref{prp} there is an admissible inverse limit $(\varprojlim_k\Lambda_k,\sigma)$ of specifications of $(Y,S)$ with $\htop(\Lambda_0)$ arbitrarily close to $\htop(S)$.  By Lemma \ref{fest} such an inverse limit  is almost Borel $\{\hbor<\htop(\Lambda_0)\}$-universal. Finally  by Lemma \ref{rat}   
this inverse limit  almost Borel embeds  in $(Y,S)$ so that $(Y,S)$ is also almost Borel  $\{\hbor<\htop(\Lambda_0)\}$-universal. As it holds for $\htop(\Lambda_0)$ arbitrarily close to $\htop(S)$ the system is $\{\hbor<\htop(S)\}$-universal.
\end{proof}

\section{Fully topologically and almost Borel  universal systems}
We discuss now the relevance of fully embedding in our context. One can not embed topologically an e.a.z.  systems $(X,T)$  in a system $(Y,S) $ with the weak specification property  and larger topological entropy in  such a way the invariant measures of the embedded system $(X', S)$ have full support. Indeed $Y\setminus X'$ being open, it should be the empty set and  therefore $(X,T)$ is topologically conjugated to $(Y,S)$. In particular we have the equality  $\htop(X,T)=\htop(Y,S)$, which contradicts our assumption. \\

In the almost Borel setting,  the concept of fully embedding makes sense and an easy adaptation of the construction in the above section give the following statement. A system $(Y,S)$ is said to be fully almost Borel  $\{\hbor<\htop(S)\}$-universal when any Borel system $(X,T)$ with $\hbor(X,T)<\htop(Y,S)$ almost Borel embeds  in $(Y,S)$ such that the pushforward of $T$-invariant measures by the embedding  have full support.

\begin{theo}\label{lat}
Any topological system $(Y,S)$ with the almost weak specification property is fully  almost Borel  $\{\hbor<\htop(S)\}$-universal.
\end{theo}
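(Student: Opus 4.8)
The plan is to re-run the construction of the admissible inverse limit from Proposition \ref{prp}, decorating each scale so that the resulting embedding forces visits to a prescribed dense set with positive frequency. Fix a countable dense sequence $(y_j)_{j\geq 1}$ in $Y$. Since any $T$-invariant measure is a mixture of ergodic ones, and full support of each ergodic component yields full support of the mixture, it suffices to treat an ergodic (aperiodic) measure $\nu$ on $X$. For such $\nu$ and the limit embedding $\phi$ of Lemma \ref{rat}, the pointwise ergodic theorem gives $\phi_*\nu\bigl(B(y_j,\delta)\bigr)>0$ as soon as the $\phi$-orbit of $\nu$-a.e. point enters $B(y_j,\delta)$ with positive frequency. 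As $(y_j)$ is dense, every nonempty open set contains such a ball, so it is enough to guarantee these positive visiting frequencies.

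To produce them I would modify the generating words at each scale. When building $\Lambda_k$ in Proposition \ref{prp}, I glue into every generating word a short orbit segment of some length $s_k$ shadowing $y_k$ within $\epsilon_k$; this is possible because $(Y,S)$ has the almost weak specification property, so an extra piece of orbit near $y_k$ may be inserted at the cost of a gap of length $L_{\epsilon_k}(s_k)$. I keep $s_k+L_{\epsilon_k}(s_k)$ negligible with respect to $\underline{N_k}$, so that none of the quantitative inputs change: the estimate of Lemma \ref{entr} still gives $\htop(\Lambda_0)$ close to $\htop(S)$, the bound $\ocap(G_k)<1/2^k$ persists, the inequalities of the last item of Definition \ref{admiss} survive after enlarging $P_k$ by $s_k+L_{\epsilon_k}(s_k)$, and the $\epsilon'_k$-admissibility conditions of Definition \ref{adm} are unaffected because the decoration is placed away from the marker region and plays no role in the separation and identification arguments of Lemma \ref{ent}. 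Since the same segment is inserted in every generating word, the combinatorial correspondence between $1$-blocks of $\Lambda_k$ and $(n_k,n_k+1)$-blocks of $\Lambda_{k-1}$ is unchanged, so the block code $\pi_k$ and the conjugacy $\Lambda_k\cong\Lambda_0\times Z^k_{\underline n}$ of item (3) of Definition \ref{admiss} still hold. Consequently Lemma \ref{rat} applies verbatim and yields an injective equivariant Borel embedding $\phi=\lim_k\phi_k$ on a full set.

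It remains to see that the inserted visits are visible to the limit map and occur with positive frequency. Under the conjugacy of item (3), the starting positions of scale-$k$ generating words form a set of fixed positive frequency for every aperiodic invariant measure, coming from the block structure of $Z_{\underline n}$. I place each $y_k$-segment inside the part of the generating word reproduced faithfully under the block code, namely the region controlled by item (4) of Definition \ref{admiss}. At each deeper scale $l>k$ this segment is shadowed within an additional error $\epsilon_l$, except when it falls into the recoded middle block, whose relative length $p_l/\underline{N_l}$ can be taken summably small. Hence on the positive-frequency set of positions that avoid these recoded blocks at all later scales, the limit map stays within $\sum_{l\geq k}\epsilon_l\leq\epsilon'_{k-1}$ of the inserted near-$y_k$ orbit. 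Therefore the $\phi$-orbit of $\nu$-a.e. point visits $B(y_k,2\epsilon'_{k-1})$ with positive frequency for every $k$, so $\phi_*\nu$ has full support.

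The main obstacle is exactly this last persistence step: the decorations are introduced at a fixed scale while the embedding is a multiscale limit whose later stages recode and overwrite large portions of each word, so one must certify that a positive-frequency set of visits survives every subsequent recoding while the separation estimates underlying injectivity (Lemma \ref{ent}) remain intact. Pinning each visit to the reproduced region of Definition \ref{admiss}(4) and controlling both the summable shadowing errors $(\epsilon_l)_l$ and the summable recoded fractions $(p_l/\underline{N_l})_l$ is what makes the adaptation work; everything else is a routine repetition of the arguments of the previous section.
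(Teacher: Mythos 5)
Your overall scheme is the same as the paper's (decorate the generating words of each $\Lambda_k$ with glued-in orbit segments, check that admissibility, the entropy estimate, the $\ocap$ bounds and the block-code structure survive, then pass the visits to the limit map), and that bookkeeping part of your argument is sound. The gap is in the final step, and it is structural: you insert at scale $k$ a segment shadowing \emph{only the single point} $y_k$, so the only visits you can certify are to one ball centred at $y_k$ whose radius is fixed once $k$ is fixed. By your own accounting that radius is $2\epsilon'_{k-1}$, and since $\epsilon'_{k-1}=\epsilon_0-\sum_{l=1}^{k-1}\epsilon_l>\epsilon_0/2$, it never drops below $\epsilon_0$; knowing $\phi_*\nu\left(B(y_k,2\epsilon'_{k-1})\right)>0$ for all $k$ only shows that the support of $\phi_*\nu$ is $\epsilon_0$-dense in $Y$, not that it equals $Y$. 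Even after correcting the error bound (the limit deviates from $\phi_k$ by at most $\epsilon''_{k-1}:=\sum_{l\geq k}\epsilon_l$, which does tend to $0$ --- you conflated $\epsilon'$, which is bounded below, with $\epsilon''$, which is summable tail), the problem remains: for a \emph{fixed} $y_j$ your construction yields visits to $B(y_j,\epsilon''_{j-1})$ and nothing smaller, whereas the reduction in your first paragraph requires positive-frequency visits to $B(y_j,\delta)$ for every $j$ and \emph{arbitrarily small} $\delta$. An arbitrary open set forces you to handle radii below $\epsilon''_{j-1}$, for which the construction gives no guarantee; each dense point is tied to exactly one scale and one radius.

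The paper's proof avoids this by exploiting that almost weak specification implies topological transitivity: it fixes one point $z$ with dense forward orbit and inserts at scale $k$ an $\epsilon_k$-\emph{dense} segment $z,Sz,\dots,S^{r_k-1}z$ (one meeting every ball of radius $\epsilon_k$) into every generating word of $\Lambda_k$, with $m_k\gg r_k$. Thus \emph{every} point of $Y$, not one point per scale, is approached within $\epsilon_k$ at scale $k$, and in the limit every ball of radius $\epsilon_k+\epsilon''_k=\epsilon''_{k-1}$ gets positive measure; since $\epsilon''_{k-1}\rightarrow 0$, full support follows. Your argument can be repaired in the same spirit: insert at scale $k$ segments shadowing all of $y_1,\dots,y_k$ (or enumerate the dense set so that every point recurs infinitely often, hence every tail is dense), and use the radius $\epsilon''_{k-1}$ in place of $2\epsilon'_{k-1}$; then, given a nonempty open $U$, one may choose $j$ large enough that $\epsilon''_{j-1}$ is small \emph{and} $y_j$ lies well inside $U$. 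Without such a device the proof does not close.
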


\begin{proof}
A topological system $(Y,S)$ with the almost weak specification property is topologically transitive. 
Let $z$ be a point  in $Y$ with a dense forward $S$-orbit. For all $k$ we fix an integer $r_k$ such that 
$Z,Sz,...,S^{r_k-1}z$ is $\epsilon_k$-dense, i.e. this finite piece of orbit meets all balls of radius $\epsilon_k$. Then in the construction of the specifications $(\Lambda_k)_k$ we replace the final subword $*^{l_k}$ of the generating words by $*^{l_k},z,Sz,...,S^{r_k-1}z,*^{L_{\epsilon_k}(r_k)}$. The construction remains valid provided one choose the length of the marker word $m_k\gg r_k$. Finally for any ergodic $T$-invariant measure $\mu$ the induced measure $\phi_k^*\mu$ on $Y$ give positive weight to any ball of radius $\epsilon_k$ and by admissibility of the inverse limit $\varprojlim_k\Lambda_k$  so does the measure $\phi^*\mu$ for any ball of radius $\epsilon_k+\epsilon''_k=\epsilon''_{k-1}$. As $k$ may be chosen arbitrarily we conclude that $\phi^*\mu$ has full support. 
\end{proof}
\end{large}

\end{document}